\newtheorem{theorem}{Theorem}
\newtheorem{lemma}[theorem]{Lemma}
\newtheorem{proposition}[theorem]{Proposition}
\newtheorem{remark}[theorem]{Remark}
\theoremstyle{definition}
\newtheorem{condition}[theorem]{Condition}
\newcommand{\R}{\mathbb R}
\newcommand{\N}{\mathbb N}
\newcommand{\edot}{\,\cdot\,}
\newcommand{\e}{w}
\newcommand{\en}{E}
\DeclareMathOperator{\Io}{Id}
\DeclareMathOperator{\WF}{WF}
\newcommand{\eps}{\epsilon}
\newcommand{\la}{\lambda}
\newcommand{\diam}{\operatorname{diam}}
\newcommand{\dist}{\operatorname{dist}}
\newcommand\abs[1]{\left\vert#1\right\vert}
\newcommand\sabs[1]{{\lvert#1\rvert}}
\newcommand\norm[1]{{\left\Vert#1\right\Vert}}
\newcommand\snorm[1]{\Vert#1\Vert}
\newcommand\set[1]{{\left\{#1\right\}}}
\newcommand\inner[2]{[#1] \cdot [#2] }
\newcommand\sinner[2]{#1 \cdot #2}
\newcommand*\bigcdot{\mathpalette\bigcdot@{.6}}
\newcommand*\bigcdot@[2]{\mathbin{\vcenter{\hbox{\scalebox{#2}{$\m@th#1\bullet$}}}}}
\newcommand{\Po}{\mathbf  P}                         % linear operator in NN
\newcommand{\Ko}{\mathbf  K}                        
\newcommand{\Eo}{\mathbf  E}                         
\newcommand{\timereversal}{\mathbf{W}^\sharp}    % regularization operator
\newcommand{\mtimereversal}{\mathbf{W}^\sharp}    % regularization operator
\newcommand{\cutop}{\boldsymbol \Psi}    % regularization operator
\newcommand{\cutoff}{ \Psi}    % regularization operator
\newcommand{\Wo}{\mathbf W}                         % linear operator in NN
\colorlet{lred}{red!40}
\colorlet{lgreen}{green!40}
\colorlet{lblue}{blue!40}
\numberwithin{equation}{section}
\numberwithin{figure}{section}
\numberwithin{theorem}{section}
\newcommand{\fnum}{{\tt f}}
\newcommand{\gnum}{{\tt{g}}}
\newcommand{\Wnum}{\boldsymbol{\tt{W}}}
\newcommand{\Rnum}{\boldsymbol{\tt{W}}^\sharp}
\newcommand{\Enum}{\boldsymbol{\tt{E}}}
\newcommand{\Pnum}{\boldsymbol{\tt{P}}}
\newcommand{\Om}{\Omega}
\title{Analysis for Full Field Photoacoustic Tomography with Variable Sound Speed}
\author{Linh V. Nguyen}
\affil{Department of Mathematics, University of Idaho\authorcr
              875 Perimeter Dr, Moscow, ID 83844, USA\authorcr
               E-mail: \tt{lnguyen@uidaho.edu}             }
\author{Markus Haltmeier}
\affil{Department of Mathematics, University of Innsbruck\authorcr
Technikerstrasse 13, 6020 Innsbruck, Austria
 \authorcr E-mail:  \texttt{markus.haltmeier@uibk.ac.at}
 }
\author{Richard~Kowar}
\affil{Department of Mathematics, University of Innsbruck\authorcr
Technikerstrasse 13, 6020 Innsbruck, Austria
 \authorcr E-mail:  \texttt{richard.kowar@uibk.ac.at}
 }
\author{Ngoc Do}
\affil{Department of Mathematics, Missouri State University \authorcr
Springfield, Missouri, USA\authorcr
 E-mail: \tt{ngocdo@MissouriState.edu} }
\begin{document}

\maketitle

\begin{abstract}
Photoacoustic tomography (PAT) is a non-invasive imaging modality that requires recovering the initial data of the wave equation from certain measurements of the solution outside the object. In the standard PAT measurement setup, the used data consist of time-dependent signals measured on an observation surface.  In contrast, the measured data from the recently invented full-field detection technique provide the solution of the wave equation on a  spatial domain at a single  instant in time. While reconstruction using classical PAT data has been extensively studied, not much is known for the full field PAT problem.    In this paper, we build   mathematical foundations of the latter problem for variable sound speed and settle its uniqueness and stability. Moreover, we introduce an exact inversion method using time-reversal and study its convergence. Our results demonstrate the suitability of both the full field approach and the proposed time-reversal technique for high resolution photoacoustic imaging.  

\noindent \textbf{Keywords:} full field, photoacoustic tomography, time reversal, uniqueness, stability, Neumann series

\noindent \textbf{AMS subject classifications:}  35R30, 35L05, 92C55
\end{abstract}

\section{Introduction} \label{sec:intro}

Consider the following initial value problem for wave equation  for an inhomogeneous isotropic medium 
\begin{equation}\label{forward}
\left\{ \begin{aligned}
     &\partial_t^2 p(x,t) - c^2(x)\,\Delta p (x,t) = 0   && \mathrm{for}\,  (x,t) \in \R^n\times (0,\infty)  
     \\ 
     &p(x,0) = f(x)  \qquad &&  \mathrm{for}\,  x \in \R^n
     \\
     &\partial_t p(x,0) = 0  \qquad &&  \mathrm{for}\,  x \in \R^n  \,.  
\end{aligned}  
\right.
\end{equation}
Here  $c \in C^\infty(\R^n)$ denotes the sound speed
and $f\in H_0^1(\R^n)$ the  initial data that is supported inside a bounded domain 
$\Omega\subseteq \R^n$ with Lipschitz boundary. We assume that the sound speed is positive everywhere and constant on the complement $\Omega^c \coloneqq \R^n \setminus \Omega$ of $\Omega$.  After rescaling we assume $c |_{\Omega^c} = 1$.  We refer to the solution  $p \colon \R^n \times [0, \infty) \to \R$ of \eqref{forward} as acoustic pressure field and $f$ as the initial pressure. 

Recall that $f \colon \Omega \to \R$  is an element of the Sobolev space $H^1(\Omega)$ if it is Lebesgue measurable and  $\norm{ f }^2_{H^1(\Omega)} \coloneqq \int_{\Omega} \, |\nabla f(x)|^2\,\mathrm{d} x + \int_{\Omega} \, | f (x)|^2\,\mathrm{d} x$ is finite.
Moreover, $H_0^1(\Omega)$ consists of all elements in $H^1(\Omega)$ that are supported inside $\Omega$. The space $H_0^1(\Omega)$ is equipped with the norm
\begin{equation*}
\| f \|_{H_0^1(\Omega)} \coloneqq \sqrt{ \int_{\Omega} \, |\nabla f(x)|^2\,\mathrm{d} x } \,,
\end{equation*}
which is  equivalent to $\norm{\, \cdot \, }_{H^1(\Omega)}$ when restricted  to  $H_0^1(\Omega)$.
We note that each $f\in H_0^1(\Omega)$ can be extended to a function of $H_0^1(\R^n)$ using the value zero on $\Omega^c$, which is tacitly  done in this paper. 

\subsection*{Full field photoacoustic tomography}

The aim of photoacoustic tomography (PAT) is to recover the initial pressure from certain observations of the  acoustic pressure field  made outside of $\Omega$.  In standard PAT, the  data  is given by the restricted  pressure $p | _{S \times [0,T]}$, where $S \subseteq  \R^n$ is an $(n-1)$-dimensional observation surface \cite{oraevsky1998optoacoustic,xu2006photoacoustic,finch2004determining,kuchment2008mathematics,beard2011biomedical,kuchment2013radon,haltmeier2017analysis,poudel2019survey}. Opposed to that, in full field PAT  introduced in  \cite{nuster2014high,nuster2010full}, the data provide the acoustic pressure  only for a single and fixed time  $T$ but  on an $n$-dimensional measurement domain.  

To be more specific, for  given $T>0$, we define the following two operators
\begin{align}\label{defAoe}
     & \Wo_T  \colon H_0^1(\Omega) \to H_0^1(\R^n) \colon  f\mapsto p(\,\cdot\,,T)  \\
     & \Wo_{T, \Omega} \colon H_0^1(\Omega) \to H^1(\bar \Omega^c)  \colon f \mapsto p(\,\cdot\,,T)|_{\bar \Omega^c}\,,
\end{align}
where $p$ is the solution of  \eqref{forward}. We refer to $\Wo_T$ as the complete single time wave transform  and to $\Wo_{T, \Omega}$ as the exterior single time wave transform. 
Full field PAT provides approximations of $\Wo_{T, \Omega} f$ from which the aim is to recover     
approximations to the initial  pressure $f$.
In \cite{zangerl2019full} it  is outlined  how actual full field PAT data can be reduced to  $\Wo_{T, \Omega} f$.

In this paper we prove uniqueness and stability of inverting  $\Wo_{T, \Omega}$ and derive an exact inversion procedure.  

\subsection*{Related work}

For the standard PAT problem there is a vast literature on various  practical and theoretical  aspects (see, for example,   \cite{xu2006photoacoustic,kuchment2008mathematics,beard2011biomedical,kuchment2013radon,haltmeier2017analysis,poudel2019survey}). In that context, the time-reversal method has been studied  intensively  \cite{finch2004determining,hristova2008reconstruction,stefanov2009thermoacoustic,stefanov2011thermoacoustic}. However, to the best of our knowledge, the time-reversal method has not developed  for PAT with full field data.   

Only few works exist \cite{nuster2010full,nuster2014high,zangerl2019full,haltmeier2019photoacoustic}  on the full field inversion problem. The work \cite{nuster2010full} considers  constant speed of sound and the problem is reduced to the inversion of the Radon transform. The work \cite{zangerl2019full} deals with non-constant speed and uses the standard Landweber iterative method. However, the article uses the data in the whole space, not the exterior data as we consider here. In the proceeding \cite{haltmeier2019photoacoustic}, variational regularization  is used with exterior data. Neither uniqueness nor stability has been proven there.   In the present article, for the first time, we  prove uniqueness and stability for  inverting $\Wo_{T, \Omega}$. Moreover,  we propose and analyze an iterative time-reversal procedure for its inversion.    

\section{Uniqueness and stability}
\label{sec:theory}

Let $\R^n$ be equipped with the metric  $c^{-2}(x)\,dx^2$. We denote by $\diam(\Omega)$ the diameter of $\Omega$, defined as the longest distance between any two points inside $\bar \Omega$ with respect to the metric $(g_{i,j})$.  We recall that $T>0$ is  a fixed observation time and  $\Omega \subseteq \R^n$ a domain with Lipschitz boundary.

\subsection{Uniqueness of reconstruction} Our first aim  is to prove the injectivity of  $\Wo_{T, \Omega}$, which implies that the full field PAT problem is uniquely solvable.  For that purpose we start by recalling  a uniqueness result for the wave equation due to Stefanov and Uhlmann \cite{stefanov2009thermoacoustic}.
\begin{lemma}\label{lemma:1} 
Let $f\in H_0^1(\R^n)$ and suppose $T > \diam (\Omega) / 2$. If the   solution $p$ of \eqref{forward}  satisfies  $p( \edot ,T)|_{\Omega^c} = 0$ and $(\partial_t p)(\edot,T)|_{\Omega^c} = 0$, then  $f = 0$.  
\end{lemma}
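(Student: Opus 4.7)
The plan is to combine the time-reversibility of the wave equation, finite speed of propagation in the Riemannian metric $g = c^{-2}\,dx^2$, and a Tataru-type unique continuation argument.

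First, I would exploit that on $\Omega^c$, where $c \equiv 1$, the wave equation reduces to $\partial_t^2 p = \Delta p$. The hypothesis that both $p(\edot, T)$ and $\partial_t p(\edot, T)$ vanish on $\Omega^c$, combined with iteration of this identity, forces every time derivative $\partial_t^k p(\edot, T)$ to vanish on $\interior(\Omega^c)$. Hence $p$ vanishes to infinite order along $\interior(\Omega^c) \times \{T\}$, and the classical domain-of-dependence argument for the constant-coefficient wave equation in $\Omega^c$ yields $p \equiv 0$ on the double cone $\{(x, t) : x \in \Omega^c,\ \dist(x, \partial\Omega) > |t - T|\}$.

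Next, I would propagate this zero set across $\partial\Omega$ into $\Omega \times \R$ using a unique continuation theorem for the wave equation with smooth variable coefficients (Tataru, or Robbiano--Zuily). The geometry of the propagation is governed by $g$: zeros spread at unit Riemannian speed along space-time directions that remain pseudo-convex with respect to the wave operator. The assumption $T > \diam(\Omega)/2$ is precisely tuned so that iterated application of unique continuation extends the zero set all the way to $\bar\Omega \times \{0\}$; since $p(\edot, 0) = f$, this yields $f \equiv 0$.

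The main obstacle is the geometric bookkeeping in the second step: verifying that the zero set of $p$, initially known only on the exterior double cone around $t = T$, can be propagated to cover $\bar\Omega \times \{0\}$ under the assumption $T > \diam(\Omega)/2$. The factor of $1/2$ has the natural interpretation that a wave emanating from an interior point $x \in \Omega$ needs time $\diam(\Omega)/2$ to reach a sufficiently distant exterior point and then the same time again to carry that information back. The one-dimensional constant-speed case, where d'Alembert's formula makes this sharp by an elementary direct computation, provides the guiding intuition; for the general smooth variable-speed setting, the technical content is encapsulated in the cited Stefanov--Uhlmann argument.
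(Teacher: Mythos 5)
The paper does not actually prove this lemma; it is recalled from Stefanov and Uhlmann \cite{stefanov2009thermoacoustic}, so your sketch has to be measured against the argument there. Your first step is essentially fine, although the infinite-order vanishing is superfluous: the domain-of-dependence argument needs only that the two Cauchy data at $t=T$ vanish on $\Omega^c$, and since $c\equiv 1$ there one gets $p=0$ on the double cone $D_T\coloneqq\set{(x,t): \dist(x,\Omega)>|t-T|}$. The gap is in the second step. The boundary of $D_T$ is characteristic --- $D_T$ is exactly the region supplied by finite speed of propagation --- so unique continuation cannot push zeros across it: any time-like segment $\set{x_0}\times[a,b]$ contained in $D_T$ generates, via Tataru's theorem, the double cone $\set{(x,t): \dist_g(x,x_0)<(b-a)/2-|t-(a+b)/2|}$, which is again contained in $D_T$. ``Iterated application of unique continuation'' starting from $D_T$ alone therefore produces nothing new and never reaches $\bar\Omega\times\set{0}$. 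Indeed, from the final-time conditions alone no conclusion is possible: solving backward from arbitrary nonzero Cauchy data supported in $\Omega$ at time $T$ gives a solution satisfying your hypotheses with $p(\edot,0)\neq 0$.

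What your argument never uses is the other hypothesis, $\partial_t p(\edot,0)=0$ (note the lemma is stated for general $f\in H^1_0(\R^n)$ and is later applied with $f$ not supported in $\Omega$, so this is the only available extra information, and without it the statement is false by the backward-solution example above). This condition yields the even extension $p(x,-t)=p(x,t)$, hence the Cauchy data at $t=-T$ also vanish on $\Omega^c$ and $p=0$ on $D_T\cup D_{-T}$. The decisive observation is that for $x_0$ with $\dist(x_0,\Omega)=R>T$ the two cones overlap in time, so $p$ vanishes on a neighborhood of the single long time-like segment $\set{x_0}\times[-(R+T)+\eps,(R+T)-\eps]$ centered at $t=0$; only now does Tataru's theorem have something to act on, giving $p(\edot,0)=0$ on $\set{x:\dist_g(x,x_0)<R+T-\eps}$, and varying $x_0$ covers $\bar\Omega$ under the stated condition on $T$. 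Relatedly, your heuristic for the factor $1/2$ (a round trip out and back) does not describe the actual mechanism, which is the merging of the two reflected cones at $t=\pm T$ --- equivalently, that of the two wave branches emanating from any point at least one has left $\Omega$ by time $T$.
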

Denote by $B_{R} \subseteq \R^n$ the ball of radius $R > 0$ in the Euclidean metric of $\R^n$.
\begin{lemma}\label{lem:2} 
For $\eps >0$ and $h\in H_0^1(\R^n)$, let  $p \in C( [0,T], H^1(B_{T + \epsilon}))$ satisfy 
\begin{equation} \label{eq:wave:lemma}
\left\{ 
\begin{aligned}
     &\partial_t^2 u(x,t) - \Delta u (x,t) = 0 && \mathrm{for}\,  (x,t) \in B_{T+\epsilon} \times [0,T] \\ 
     &u(x,0) = h(x)  && \mathrm{for}\,  x \in B_{T+\epsilon}\\
     &\partial_t u(x,0) = 0 &&\mathrm{for}\,  x \in B_{T+\epsilon}\,.
\end{aligned}
\right. 
\end{equation}
Then  $h(x) =0$ for $x \in B_T$ and $u(x,T) = 0$ for $x \in B_\epsilon$ implies  $h(x) = 0$ for $x  \in B_{T+\epsilon}$.
\end{lemma}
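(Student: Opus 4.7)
My plan is to combine finite speed of propagation with the time-reversal symmetry afforded by $\partial_t u(\cdot,0)=0$, and then to pass to a one-dimensional problem via spherical means.

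First I would apply finite speed of propagation. Since the wave equation has unit speed on $B_{T+\eps}\times[0,T]$ and the backward cone of dependence from any $(x,t)$ with $|x|+t<T$ is contained in $B_T\subseteq B_{T+\eps}$, and both $h$ and $\partial_t u$ vanish there at time zero, the standard energy estimate yields $u(x,t)=0$ on the open spacetime cone $\{(x,t)\in B_{T+\eps}\times[0,T] : |x|+t<T\}$. Next, because $\partial_t u(\cdot,0)=0$, the even-in-time extension $\bar u(x,t):=u(x,|t|)$ belongs to $C([-T,T];H^1(B_{T+\eps}))$ and is a (weak) solution of the wave equation on $B_{T+\eps}\times(-T,T)$. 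This reflected solution vanishes on the full double cone $\{(x,t):|x|+|t|<T\}$ and, by hypothesis, also on the two caps $B_\eps\times\{\pm T\}$.

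It then remains to show that this partial vanishing of $\bar u$ forces $\bar u(\cdot,0)=h$ to vanish on all of $B_{T+\eps}$. I would reduce to a family of one-dimensional problems via the spherical-harmonic decomposition $h=\sum_\ell h_\ell$ around the origin. For each degree $\ell$ the corresponding radial profile of $\bar u$ solves a Darboux-type equation, which—e.g.\ in dimension three via the substitution $\Phi\mapsto r\Phi$—reduces to the standard 1D wave equation $\Phi_{tt}=\Phi_{rr}$ with boundary condition $\Phi(0,t)=0$. Under this reduction, the vanishing on the double cone translates to vanishing of the profile on $\{(r,t):|r|+|t|<T\}$, while the vanishing on $B_\eps\times\{\pm T\}$ translates to vanishing at $(r,\pm T)$ for $|r|<\eps$. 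The explicit d'Alembert formula $\Phi(r,t)=\tfrac12[\phi(r+t)+\phi(r-t)]$, applied exactly as in the transparent one-dimensional instance (where $\phi|_{(-T,T)}=0$ and $\phi(y+T)+\phi(y-T)=0$ on $(-\eps,\eps)$ force $\phi=0$ on $(-T-\eps,T+\eps)$), propagates the zero set of $\phi$ from $(-T,T)$ to $(-T-\eps,T+\eps)$. Summing over spherical harmonics yields $h\equiv0$ on $B_{T+\eps}$.

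The main obstacle is in this last step: rigorously handling the lower-order "potential" term that appears in the radial reduction for the higher modes $\ell>0$, and, in even space dimensions, the absence of an immediate 1D reduction (which forces one to go through the descent method). A cleaner alternative would be to invoke a Tataru--Robbiano-type unique continuation theorem applied directly to $\bar u$, using the extra vanishing on $B_\eps\times\{\pm T\}$ to cross the characteristic cone $\{|x|+|t|=T\}$ and enlarge the null set of $\bar u$ to the full diamond $\{|x|+|t|<T+\eps\}$, whose restriction to $\{t=0\}$ is precisely $B_{T+\eps}$.
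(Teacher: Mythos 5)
Your geometric setup is correct and captures the intended mechanism: finite speed of propagation plus the even reflection in time gives $\bar u=0$ on the solid double cone $\{(x,t): |x|+|t|<T\}$, and the task is then to exploit the extra vanishing of $u(\cdot,T)$ on $B_\eps$ to push the zero set of $h$ from $B_T$ out to $B_{T+\eps}$; your one-dimensional d'Alembert computation is exactly the right model case. However, the step you yourself flag as ``the main obstacle'' is not a technicality --- it is the entire content of the lemma. After the spherical-harmonic decomposition, only the radial mode (and only in odd dimensions, via $\Phi\mapsto r\Phi$ for $n=3$) reduces to the free one-dimensional wave equation. Every mode $\ell\geq 1$ produces a singular potential $\mu_\ell/r^2$, for which the value of the solution at $(r,T)$ depends on the data on the whole interval $[\,|r-T|,\,r+T\,]$ rather than only on its endpoints, so the two-characteristics cancellation argument collapses; closing it requires a Riemann-function or Volterra integral-equation analysis of the singular radial equation (plus descent for even $n$), none of which is supplied. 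For what it is worth, the paper does not write this argument out either: its ``proof'' consists of the observation that the statement is the wave-equation analogue of the support theorem for the Euler--Poisson--Darboux equation and a reference to \cite{agranovsky2011support} and \cite[Theorem 2.1]{nguyen2013spherical}, where precisely this hard step is carried out.

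The proposed fallback via Tataru--Robbiano unique continuation does not repair the gap. The cone $\{|x|+|t|=T\}$ is characteristic for the wave operator, so no unique continuation theorem lets you cross it from knowledge of $\bar u$ inside the double cone alone; indeed the desired conclusion is false without the cap condition, since any $h$ supported in $B_{T+\eps}\setminus B_T$ yields $\bar u=0$ on the whole double cone. The cap condition supplies only half of the Cauchy data on the non-characteristic slices $\{t=\pm T\}$ --- $\partial_t \bar u(\cdot,\pm T)$ is unknown on $B_\eps$ --- so Holmgren or Tataru cannot be invoked across those slices either. Applied to the double cone, Tataru's theorem at best reproduces the double cone itself, i.e.\ $h=0$ on $B_T$, which is the hypothesis you started from. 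So the unique-continuation route yields nothing beyond $B_T$, and the proof must go through the explicit radial analysis (or an equivalent argument from the cited references) after all.
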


\begin{proof}
For $u$ satisfying the Euler-Poisson-Darboux equation with initial data $(f,0)$ instead of the wave equation \eqref{eq:wave:lemma}, the result was proven in \cite{agranovsky2011support,nguyen2013spherical}. The proof of the current situation is similar to \cite[Theorem 2.1]{nguyen2013spherical} and  is therefore omitted. 
\end{proof}

In the following for  any $a>0$  we write  
\begin{align*}
	\Omega^{(1)}_a &\coloneqq  \set{ x \in \R^n \mid  \dist(x,\Omega) \leq a } \\
	 \Omega^{(2)}_a &\coloneqq \set{ x \in \R^n \mid  \dist(x,\Omega) \geq a } \,.
\end{align*}
Clearly,   for $f \in H_0^1(\Omega)$ we have $\Wo_{T, \Omega} f  \in H_0^1(\Omega^{(1)}_T)$. 

\begin{lemma} \label{lem:wave}
Let $\Omega$ be convex, $h \in H^1_0(\R^n)$ and suppose $u$ satisfies
\begin{equation*} 
\left\{ 
\begin{aligned}
     &\partial_t^2 u(x,t) - \Delta u (x,t) = 0  && \mathrm{for}\,  (x,t) \in  \Omega^{c} \times (0,\infty) \\ 
     &u(x,0) = h(x) && \mathrm{for}\,  x \in  \Omega^{c} \\ 
     &\partial_t u(x,0) = 0 && \mathrm{for}\,  x \in  \Omega^{c}   \,. 
\end{aligned}
\right. 
\end{equation*}
Then $u(x,T) = 0$ for all $x \in \Omega^{(2)}_T$ implies $h(x) =0$ for all $x \in  \Omega^{c}$.
\end{lemma}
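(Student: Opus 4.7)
The approach is to apply Lemma~\ref{lem:2}, translated to arbitrary centers by translation invariance of the wave equation, on a family of large balls tangent to $\partial \Omega$ from outside, whose union sweeps out all of $\Omega^c$. Convexity of $\Omega$ is crucial here: at each $y \in \partial \Omega$ it provides a supporting hyperplane with outward unit normal $n_y$, so that $\Omega \subseteq \{x : (x - y) \cdot n_y \leq 0\}$, and for any $R > 0$ the open ball $B_y \coloneqq B(y + R n_y, R)$ lies strictly on the opposite side of that hyperplane; in particular $B_y \cap \bar \Omega = \emptyset$, so the wave equation holds throughout $B_y$.

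Next, fixing $R > T$ (to be taken large), I would apply Lemma~\ref{lem:2} to $B_y$ with parameters $T$ and $\epsilon \coloneqq R - T$, which requires checking two conditions. On the small ball $B(y + R n_y, R - T)$, a short computation gives $(z - y) \cdot n_y \geq T$ for every $z$, hence $\dist(z, \Omega) \geq T$, so $z \in \Omega^{(2)}_T$ and $u(z, T) = 0$ by hypothesis. On the middle ball $B(y + R n_y, T)$, one similarly bounds $|z| \geq R - T - \sup_{w \in \Omega} |w|$, which exceeds any prescribed $M > 0$ once $R$ is chosen large enough; assuming that $h$ is supported in $B(0, M)$ -- as will be the case in the intended application, where $h$ arises from finite-speed propagation of compactly supported initial data -- this forces $h = 0$ on the middle ball. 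Lemma~\ref{lem:2} then yields $h = 0$ on all of $B_y$.

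To finish, I would fix $R$ large enough that every $z \in \Omega^c$ with $\dist(z, \Omega) \geq 2R$ automatically lies outside $B(0, M)$, and then check that $\bigcup_{y \in \partial \Omega} B_y$ covers the remaining region $\{z \in \Omega^c : \dist(z, \Omega) < 2R\}$: taking $y$ to be the (unique, by convexity) closest point in $\bar \Omega$ to $z$ and writing $z = y + d n_y$ with $d = \dist(z, \Omega) < 2R$, one has $|z - (y + R n_y)| = |d - R| < R$.

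The main obstacle will be the geometric bookkeeping in the second paragraph -- in particular, confirming that the small ball $B(y + R n_y, R - T)$ is genuinely inside $\Omega^{(2)}_T$ -- which hinges on convexity of $\Omega$ via the single supporting hyperplane at $y$. A secondary concern is the implicit compact-support assumption on $h$: this is natural in the paper's setting but should either be stated explicitly or handled by a cutoff/density argument for general $h \in H_0^1(\R^n)$.
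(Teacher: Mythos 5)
Your proposal is correct and is essentially the argument the paper has in mind: the paper's own proof of Lemma~\ref{lem:wave} is omitted with a pointer to Lemma~\ref{lem:2} and to \cite[Proof of Theorem~3]{agranovsky2011support}, whose tangent-ball sweeping construction (large balls of radius $R$ tangent to supporting hyperplanes of the convex set $\Omega$, with the translated Lemma~\ref{lem:2} applied on each, plus the projection-onto-$\bar\Omega$ covering step) is exactly what you carry out. Your caveat about the compact support of $h$ is harmless in context, since in the paper's only use of this lemma (the proof of Theorem~\ref{T:injectivity}) one has $h=\partial_t p(\cdot,T)$, which is supported in $\Omega^{(1)}_T$ by finite speed of propagation.
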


\begin{proof}
Using Lemma~\ref{lem:2}, the proof  follows the lines of \cite[Proof of Theorem~3]{agranovsky2011support}  and for the sake of brevity is omitted. 
\end{proof}

Here is our main uniqueness result.

\begin{theorem}[Main injectivity result]\label{T:injectivity} 
If  $T > \diam (\Omega) / 2$, then the exterior single time wave transform  $\Wo_{T, \Omega} \colon H_0^1(\Omega) \to H^1(\bar \Omega^c)$ is injective.  In particular, the equation $\Wo_{T, \Omega} f = g$ has at most one  solution in $H_0^1(\Omega)$ for $g \in H^1(\bar \Omega^c)$.     
\end{theorem}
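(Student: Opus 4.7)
The plan is to reduce the injectivity of $\Wo_{T,\Omega}$ to the Stefanov--Uhlmann uniqueness recalled in Lemma~\ref{lemma:1}. Suppose $f \in H_0^1(\Omega)$ satisfies $\Wo_{T,\Omega} f = 0$, and let $p$ denote the associated solution of \eqref{forward}. By definition of $\Wo_{T,\Omega}$ we obtain immediately $p(\cdot,T)|_{\bar\Omega^c} = 0$, which already supplies one of the two Cauchy data required by Lemma~\ref{lemma:1}. The heart of the proof is therefore to upgrade this single trace into the full pair, i.e.\ to also establish $(\partial_t p)(\cdot,T)|_{\Omega^c} = 0$; once this is in hand the hypothesis $T > \diam(\Omega)/2$ lets Lemma~\ref{lemma:1} conclude $f = 0$.

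To produce the missing time derivative I would exploit two special features of the exterior. First, on $\Omega^c$ the sound speed equals $1$, so $p$ satisfies the constant-coefficient wave equation there. Second, because $\supp f \subseteq \bar\Omega$, the Cauchy pair $(p(\cdot,0), \partial_t p(\cdot,0))$ vanishes identically on $\Omega^c$. Thus $p|_{\Omega^c}$ is a constant-speed exterior wave that vanishes identically at the initial time and whose spatial trace also vanishes at the terminal time $t=T$. My plan is to glue these two pieces of information by constructing the odd reflection
\[
P(x,t) := \begin{cases} p(x,t), & t \in [0,T], \\ -\,p(x, 2T - t), & t \in [T, 2T], \end{cases}
\]
which is continuous across $t=T$ precisely because $p(\cdot,T)|_{\Omega^c}=0$, and which is a $C^1$ weak solution of $\partial_t^2 P - \Delta P = 0$ on $\Omega^c \times [0,2T]$ with $P(\cdot,0) = \partial_t P(\cdot,0) = 0$ and $P(\cdot,2T) = \partial_t P(\cdot,2T) = 0$ on $\Omega^c$. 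Applying Lemma~\ref{lem:wave} to this extended wave (or, if needed, reducing to balls via finite speed of propagation and invoking Lemma~\ref{lem:2}) should force the time-derivative trace $\partial_t P(\cdot,T) = (\partial_t p)(\cdot,T)$ to vanish on $\Omega^c$.

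With both $p(\cdot,T)|_{\Omega^c} = 0$ and $(\partial_t p)(\cdot,T)|_{\Omega^c} = 0$ in hand, Lemma~\ref{lemma:1} gives $f = 0$, proving injectivity of $\Wo_{T,\Omega}$.

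The principal obstacle is the second step. Because $\Omega^c$ is unbounded and $p|_{\Omega^c}$ is driven by uncontrolled boundary data on $\partial\Omega$, the assertion that vanishing at two time slices forces vanishing of the time derivative on the common slice is not automatic, and the available unique continuation tool (Lemma~\ref{lem:wave}) requires $\Omega$ convex. I therefore expect the delicate part to be either matching the setup of Lemma~\ref{lem:wave} by a finite-speed-of-propagation reduction, or replacing it with a localization to a large ball where Lemma~\ref{lem:2} applies. The diameter condition $T > \diam(\Omega)/2$ only enters in the concluding appeal to Lemma~\ref{lemma:1}.
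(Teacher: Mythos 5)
Your overall architecture coincides with the paper's: reduce to Lemma~\ref{lemma:1} by establishing the missing Cauchy datum $(\partial_t p)(\cdot,T)|_{\Omega^c}=0$, and obtain that from an exterior unique-continuation statement. The gap is in the mechanism of that middle step. Lemma~\ref{lem:wave} concerns a solution on $\Omega^c$ whose Cauchy data at $t=0$ has the form $(h,0)$ and which vanishes at time $T$ on $\Omega^{(2)}_T$; it then kills the \emph{position} datum $h$. Your reflected wave $P$ does not fit this template: its Cauchy data at $t=0$ (and at $t=2T$) is $(0,0)$, which carries no information on an exterior domain where energy enters freely through $\partial\Omega$, and the quantity you need to annihilate, $\partial_t p(\cdot,T)$, sits at the middle time $t=T$ as a \emph{velocity}, not as an initial position. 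So ``applying Lemma~\ref{lem:wave} to this extended wave,'' as written, does not produce the conclusion, and the hedged phrasing (``should force'') is precisely where the argument is missing.

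The paper's fix is one time differentiation, which you are a single step away from. Set $u(x,t):=\partial_t p(x,T-t)$ on $\Omega^c\times[0,T]$ (up to sign conventions this is $\partial_t$ of your reflection viewed from the middle time). Then $u$ solves the constant-speed wave equation on $\Omega^c$, its initial position is exactly the unknown trace $h=\partial_t p(\cdot,T)$, its initial velocity is $-\partial_t^2 p(\cdot,T)=-\Delta p(\cdot,T)=0$ on $\Omega^c$ because $p(\cdot,T)$ vanishes identically on the open set $\Omega^c$, and $u(\cdot,T)=\partial_t p(\cdot,0)=0$ on all of $\Omega^c\supseteq\Omega^{(2)}_T$. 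Lemma~\ref{lem:wave} now applies verbatim and yields $\partial_t p(\cdot,T)=0$ on $\Omega^c$, after which Lemma~\ref{lemma:1} finishes exactly as you say. Your side remarks are apt: the exterior problem is indeed not a Cauchy problem, and Lemma~\ref{lem:wave} does carry a convexity hypothesis on $\Omega$ that the theorem statement suppresses; the paper invokes it without further comment.
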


\begin{proof}  Suppose $f  \in H_0^1(\Omega)$ satisfies $\Wo_{T, \Omega} f = 0$ and denote by $p$ the solution of \eqref{forward}. By definition  we have $\Wo_{T, \Omega} f = p(\edot, T)|_{\bar \Omega^c}$ and thus $p(x,T) = 0$ and  $\Delta p(x,T) = 0$ for all $x \in \Omega^c$.  Define $u(x,t)  \coloneqq \partial_t p(x,T-t)$. Then $\partial_t u (x,t) = - \partial_t^2 p (x,T-t) = - \Delta p(x,T-t)$ in $\Om^c$. Consequently,  
\begin{equation*}
\left\{
\begin{aligned}
     &\partial_t^2 u(x,t) - \Delta u (x,t) = 0 && \mathrm{for}\,  (x,t) \in \Omega^c \times [0,T] \\ 
     &u(x,0) =  \partial_t p(x,T)  && \mathrm{for}\,  x \in \Omega^c  \\
     &\partial_t u(x,0) =0 && \mathrm{for}\,  x \in \Omega^c\,. 
\end{aligned}
\right.
\end{equation*}
Because   $u(x,T) = \partial_t p(x,0) =0$ in $\Omega^c$, Lemma~\ref{lem:wave}  shows  $\partial_t p(x,T) =0$ for all $x \in \Omega^c$.  Now application  of Lemma~\ref{lemma:1} gives $f=0$. 
\end{proof}

\subsection{Stability of inversion} Let us first recall some microlocal analysis for the solution of the wave equation; see for example \cite{stefanov2009thermoacoustic,treves1980introduction} for more details. Let  $\hat f(\xi) = \int_{\R^n} f(x) \, e^{-i x \cdot \xi} \, dx$ denote the Fourier transform of $f$.    Up to infinitely smooth error, the solution $p$ of \eqref{forward} can be written as 
\begin{equation} \label{E:para} 
p(x,t) = p^+(x,t) +  p^-(x,t) \coloneqq   \frac{1}{(2\pi)^{n}} \sum_{\sigma= \pm} \int_{\R^n} e^{i \phi_\sigma(x,\xi,t)} a_\sigma(x,\xi,t) \hat f(\xi) d\xi  \,.
\end{equation} 
Here, the phase functions $\phi_\pm(x,\xi,t)$ are positively homogenous of order $1$ in $\xi$ and solve the eikonal equations
\begin{equation*}
\left\{ \begin{aligned}
& \mp \partial_t \phi_\pm(x,\xi,t) = c(x) \,  \abs{ \nabla_x \phi_\pm(x,\xi,t) }  \\
 &  \phi_\pm(x,\xi,0) = \sinner{x }{ \xi } \,.
 \end{aligned}
 \right.
\end{equation*}
The functions $a_\pm$ are classical amplitudes of order $0$ satisfying $a_\pm(x,\xi,0) = 1/2$. The  principal terms $a^{(0)}_\pm(x,\xi,t)$ satisfy $a^{(0)}_\pm(x,\xi,t)=1/2$ and the homogenous equations
\begin{equation} \label{E:homo} [(\partial_t \phi_\pm) \partial_t -  c^2  \,   \nabla \phi_\pm \cdot \nabla_x + C_\pm] a_\pm^{(0)} =0 \,,
\end{equation} 
where $C_\pm  :=  (\partial_t^2 - c^2 \Delta) \phi_\pm / 2 $. Geometrically, each singularity $(x,\xi) \in \WF(f)$ is propagated by $p_+$ in the phase space along the positive bi-characteristic $(\gamma_{x,\xi}(t), \gamma'_{x,\xi}(t))$, while propagated by $p_-$ along the negative bi-characteristic given by $(\gamma_{x,-\xi}(t), \gamma'_{x,-\xi}(t)) = (\gamma_{x,\xi}(-t), - \gamma'_{x,\xi}(-t))$.

We consider the following so-called non-trapping condition. 

\begin{condition}[Non-trapping condition]\label{basiccond}
We assume that there exists a time $T_0>0$ such that each 
geodesic curve intersects $\Omega$ with the length at most $T_0$. 
\end{condition}

It is worth noting that if Condition~\ref{basiccond} holds then $\diam (\Omega) \leq T_0$. 

\subsubsection{Time-reversal operator}
For  $h \in H^1(\R^n)$ consider the following time-reversed wave equation 
\begin{equation}\label{eq:timerev}
\left\{ \begin{aligned}
     &\partial_t^2 q(x,t) - c^2(x)\,\Delta q (x,t) = 0 &&  \mathrm{for}\, (x,t) \in \R^n\times (0,T) 
     \\ 
     &q(x,T) =  h(x)  && \mathrm{for}\,  x \in \R^n 
      \\  
     &\partial_t q(x,T) = 0 && \mathrm{for}\,  x \in \R^n \, .
\end{aligned}
\right. 
\end{equation}
We define the time-reversal operator
\begin{equation} \label{defLambda}
 \timereversal_T  \colon H^1(\R^n) \to H^1(\Omega)  \colon h \to  q(\edot,0)|_{\Omega} \,,
\end{equation} 
where $q$ is the solution of  \eqref{eq:timerev}.
For a function  $\cutoff \in C_0^\infty(\R^n)$ denote by  $\cutop$ the pointwise multiplication operator  $f \mapsto  \cutoff f$. 

\begin{proposition} \label{prop:symbol} 
Let  $T > T_0/ 2 $, suppose $\cutoff \in C_0^\infty(\R^n)$ and set $x_+(x,\xi) \coloneqq \gamma_{x,\xi}(\pm T)$. Then $\timereversal_T \cutop \Wo_T \colon  H_0^1(\Omega) \to H^1(\Omega)$ is a pseudo-differential operator of order zero with principal symbol
\begin{equation*}
	\sigma(x,\xi) = \frac{1}{4} \Bigl[ \cutoff(x_+(x,\xi)) + \cutoff(x_-(x,\xi)) \Bigr] \,.
\end{equation*}
\end{proposition}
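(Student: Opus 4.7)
The plan is to combine the parametrix representation \eqref{E:para} with the time-reversal symmetry of the wave equation to realize $\timereversal_T \cutop \Wo_T$ as a sum of four Fourier integral operators, two of which are pseudo-differential operators and two of which become smoothing on $\Omega$ thanks to non-trapping. First, the substitution $\tilde q(x,t) \coloneqq q(x, T-t)$ in \eqref{eq:timerev} shows that $\tilde q$ solves a forward wave equation with initial data $(h, 0)$, so that $\timereversal_T h = (\Wo_T h)|_\Omega$. Combined with \eqref{E:para}, both $\Wo_T$ and $\timereversal_T$ split into two branches indexed by $\sigma \in \{+,-\}$, each a Fourier integral operator with phase $\phi_\sigma$, principal amplitude $1/2$, and canonical relation equal to the time-$\sigma T$ bicharacteristic flow $F_{\sigma T}$ on $T^* \R^n \setminus 0$.

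Expanding $\timereversal_T \cutop \Wo_T$ into four summands labeled by $(\sigma, \tau) \in \{+,-\}^2$, the canonical relation of the $(\sigma, \tau)$-summand is $F_{\tau T} \circ F_{\sigma T} = F_{(\sigma + \tau) T}$. The cross-sign summands $(+, -)$ and $(-, +)$ have canonical relation $F_0 = \Id$ and therefore define pseudo-differential operators of order zero. The like-sign summands have canonical relation $F_{\pm 2T}$; by Condition~\ref{basiccond} combined with $T > T_0 / 2$, every geodesic starting in $\Omega$ has permanently exited $\bar\Omega$ by time $2T > T_0$, so that $F_{\pm 2T}(T^*\Omega \setminus 0)$ lies outside $T^*\bar\Omega$. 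Hence, as maps $H^1_0(\Omega) \to H^1(\Omega)$, the like-sign summands contribute only smoothing operators and do not affect the principal symbol.

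It remains to compute the principal symbol of the cross-sign summands. For the $(+, -)$ summand, substituting the parametrices and applying stationary phase in the intermediate position and frequency variables pins the critical point to the forward bicharacteristic through $(x, \xi)$, and the total phase collapses to $x \cdot \xi$. The leading amplitude is the product of the two principal amplitudes $a_+^{(0)}$ and $a_-^{(0)}$ carried along the forward-then-backward bicharacteristic, times $\cutoff$ evaluated at the intermediate point $x_+(x, \xi)$. Using the transport equation~\eqref{E:homo} and the fact that the backward bicharacteristic exactly inverts the forward one, the amplitude product collapses to $(1/2)(1/2) = 1/4$, so the principal symbol of this summand equals $\frac{1}{4} \cutoff(x_+(x, \xi))$. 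A symmetric computation for the $(-, +)$ summand yields $\frac{1}{4} \cutoff(x_-(x, \xi))$, and summing gives the claimed principal symbol. The main technical obstacle is precisely this amplitude bookkeeping: verifying that the forward and backward transport coefficients together with the stationary-phase Jacobian trivialise along the closed forward--backward orbit, leaving only the constant $1/4$.
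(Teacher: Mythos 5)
Your route is sound and genuinely different from the paper's. You use the evenness of the wave group to write $\timereversal_T h=(\Wo_T h)|_\Omega$, expand both factors via \eqref{E:para} into $\pm$ branches, and handle the four resulting terms by the composition calculus for Fourier integral operators: the cross-sign compositions have canonical relation the identity and form the pseudo-differential part, while the like-sign ones carry $T^*\Omega$ along the bi-characteristic flow for time $\pm 2T$ out of $\bar\Omega$ and are therefore smoothing on data supported in $\Omega$ --- this matches the paper's treatment of the second summand in its formula (\ref{E:para1}). The paper avoids the composition calculus altogether: it writes an explicit parametrix $q_+$ for the time-reversed problem \eqref{eq:timerev} seeded with $\cutop\Wo_T^{(+)}f$ (one term with phase $\phi_+(x,\xi,t)$, one with phase $\phi_+(x,\xi,2T-t)$ to enforce zero velocity at $t=T$) and simply reads off the amplitude at $t=0$. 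What that construction buys is exactly the step you flag as the main obstacle, namely the constant $1/4$: since $b^{(0)}$ and $a_+^{(0)}$ satisfy the same homogeneous transport equation \eqref{E:homo}, their ratio is constant along each bi-characteristic, so $b^{(0)}(x_0,\xi_0,0)=a_+^{(0)}(x_0,\xi_0,0)\,\cutoff(x_+(x_0,\xi_0))=\tfrac12\cutoff(x_+(x_0,\xi_0))$, with no stationary-phase Jacobian to track. Be aware that your shorthand ``the amplitude product collapses to $(1/2)(1/2)$'' is not literally correct: $a_\pm^{(0)}(x,\xi,T)$ is in general not $1/2$ (only the initial values are; the transport equation contains the zeroth-order term $C_\pm$). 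What actually happens is that the forward and backward transport factors cancel because the backward leg retraces the forward one --- which is precisely the ratio argument above, or equivalently an application of Egorov's theorem to the conjugation of $\cutop$ by the half-wave propagator. If you carry out that cancellation explicitly, your argument is complete and yields the same principal symbol.
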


\begin{proof} 
Our key idea is the construction of the parametrix of time-reversed wave equation, the same spirit as \cite[Proof of Theorem 3]{stefanov2009thermoacoustic} but adapted to our context.   From (\ref{E:para}), up to smooth terms, we have $\timereversal_T \cutop \Wo_T =  \timereversal_T \cutop \Wo^{(+)}_T + \timereversal_T \cutop \Wo^{(-)}_T$ with
\begin{equation*}
	\Wo_T^{(\pm)} f (x) = p_\pm(x,T) = \frac{1}{(2\pi)^{n}} \int_{\R^n} e^{i \phi_\pm(x,\xi,T)} a_\pm(x,\xi,T) \hat f(\xi) d\xi \,.
\end{equation*}
It suffices to prove that $\timereversal_T \cutop \Wo^{(+)}_T$ is a pseudo-differential operator with principal symbol $\sigma_+(x_0,\xi_0) = \cutoff(x_+(x_0,\xi_0))/4$, for all $(x_0,\xi_0) \in T^* \Omega$.

Consider the parametrix of the time-reversed  wave equation \eqref{eq:timerev} with initial data $ h = \cutop \Wo^{(+)}_T f$, which can be written in the form 
\begin{multline*} q_+(x,t) = \frac{1}{2 (2\pi)^{n}}  \int_{\R^n} e^{i \phi_+(x,\xi,t)} b(x,\xi,t) \hat f(\xi) d\xi  
\\+  \frac{1}{2 (2\pi)^{n}} \int_{\R^n} e^{i \phi_+(x,\xi,2T-t)} b(x,\xi,2T-t) \hat f(\xi) d\xi \,,
\end{multline*}
with  $b(x,\xi,T) = \cutoff(x) a_+(x,\xi,T)$. Let us note that the first summand  in $q_+$ is a modification of the (positive) forward solution $p_+$ which in the case that  $\cutoff = 1$ exactly equals $p_+ / 2 $. The second summand  is the time-reflection of the first part through the value $t = T$. This construction imposes zero velocity at $t=T$.  Indeed, it is easy to check that $q_+$ satisfies the initial conditions $q_+(x,T)=\cutop \Wo^{(+)}_T f$ and $(q_+)_t(x,T)= 0$. Therefore, from the definition of $\timereversal_T$,
\begin{multline} \label{E:para1} 
 \timereversal_T \cutop \Wo^{(+)}_T f   =  q_+(x,0) = \frac{1}{2 (2\pi)^{n}}  \int_{\R^n} e^{i \phi_+(x,\xi,0)} b(x,\xi,0) \hat f(\xi) d\xi \\ + \frac{1}{2 (2\pi)^{n}} \int_{\R^n} e^{i \phi_+(x,\xi,2T)} b(x,\xi,2T) \hat f(\xi) d\xi,
\end{multline}
up to infinitely smoothing terms.

Note that both the principal term $a_+^{(0)}(x,\xi,t)$ and the principal term $b^{(0)}(x,\xi,t)$ of $b(x,\xi,t)$ satisfy the homogeneous transport equation (\ref{E:homo}). Hence, their ratio on each bi-characteristic is constant. In particular,
\begin{equation*}
\frac{b^{(0)}(x_0,\xi_0,0)}{a^{(0)}_+(x_0,\xi_0,0)} = \frac{b^{(0)}(x_+(x_0,\xi_0),\xi_+(x_0,\xi_0),T)}{a^{(0)}_+(x_+(x_0,\xi_0),\xi_+(x_0,\xi_0),T)}= \cutoff(x_+(x_0,\xi_0)) \,.
\end{equation*}
 Let us consider (\ref{E:para1}). Since $\phi_+(0,x,\xi) = x \cdot \xi$, the first part on the right hand side is a pseudo-differential operator with principal symbol at $(x_0,\xi_0)$ equals  
 \begin{equation*}
 \frac{1}{2} b^{(0)}(x_0,\xi_0,0) = \frac{1}{2} a_+^{(0)}(x_0,\xi_0,0) \cutoff(x_+(x_0,\xi_0)) 
 =  \frac{1}{4} \cutoff(x_+(x_0,\xi_0)) \,.
 \end{equation*}
The second summand of (\ref{E:para1}) is a Fourier integral operator that translates the singularity of $f$ at $(\gamma_{x_0,\xi_0}(-2T),\gamma_{x_0,\xi_0}'(-2T))$ to $(x_0,\xi_0)$. From the condition $T > T_0 / 2 $, we have $\gamma_{x_0,\xi_0}(-2T) \in \Omega^c$. Therefore, $f=0$ near $\gamma_{x_0,\xi_0}(-2T)$, which implies the second part on the right hand side of (\ref{E:para1}) is infinitely smoothing. This concludes our proof.
\end{proof}

\subsubsection{The stability result}

The following theorem  provides the stability of solving the final time wave inversion problem.  

\begin{theorem}[Main stability result] \label{T:stability} Assume that $T > T_0  / 2 $, with $T_0$ as in Condition~\ref{basiccond}. Then, there exists a constant  $C = C(\Omega, T , c)>0$ such that 
\begin{equation}
	\forall f \in H_0^1(\Omega) \colon \quad 
	\|f \|_{H_0^1(\Omega)} \leq C \, \norm{ \Wo_{T, \Omega} f }_{H_0^1(\Omega^c)} \,.
 \end{equation}
\end{theorem}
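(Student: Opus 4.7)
The plan is to exploit the pseudodifferential structure of $\timereversal_T \cutop \Wo_T$ established in Proposition~\ref{prop:symbol} by choosing $\cutoff$ so that this composition becomes elliptic on $\Omega$, and then to upgrade the resulting elliptic estimate to a stability bound via a standard Fredholm-style argument that absorbs a compact remainder using the injectivity from Theorem~\ref{T:injectivity}.

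\textbf{Constructing the cutoff.} I would first pick $\cutoff \in C_c^\infty(\R^n)$ with $\supp\cutoff \subset \R^n \setminus \bar\Omega$ and $0 \leq \cutoff \leq 1$ such that the principal symbol
\begin{equation*}
\sigma(x,\xi) = \frac{1}{4}\bigl[\cutoff(x_+(x,\xi)) + \cutoff(x_-(x,\xi))\bigr]
\end{equation*}
is bounded below by a positive constant on the cosphere bundle $S^*\Omega$. Condition~\ref{basiccond} together with $T > T_0/2$ ensures that for each $(x,\xi) \in S^*\Omega$ at least one of the endpoints $x_\pm(x,\xi) = \gamma_{x,\xi}(\pm T)$ lies strictly outside $\bar\Omega$, since otherwise a connected portion of the geodesic $\gamma_{x,\xi}$ passing through $x$ would remain in $\bar\Omega$ for more than $T_0$ units of length. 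Continuity of the geodesic flow and compactness of $S^*\Omega$ then provide a uniform $\epsilon_0 > 0$ with $\max\{\dist(x_+,\bar\Omega), \dist(x_-,\bar\Omega)\} \geq \epsilon_0$ for every $(x,\xi) \in S^*\Omega$. Taking $\cutoff \equiv 1$ on $\{\dist(\,\cdot\,,\bar\Omega) \geq \epsilon_0\}$ intersected with a large ball containing all reachable endpoints then forces $\sigma \geq 1/4$ on $S^*\Omega$.

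\textbf{Elliptic parametrix and compact remainder.} Proposition~\ref{prop:symbol} then makes $\Ao \coloneqq \timereversal_T \cutop \Wo_T$ an elliptic pseudodifferential operator of order zero. Standard elliptic calculus furnishes a parametrix $\Bo$ with $\Bo\Ao = \Id + \Ko$ for a smoothing, hence compact, operator $\Ko$ on $H_0^1(\Omega)$, and thus the a priori bound
\begin{equation*}
\|f\|_{H_0^1(\Omega)} \leq C_1 \|\Ao f\|_{H^1(\Omega)} + \|\Ko f\|_{H_0^1(\Omega)}.
\end{equation*}
Because $\supp\cutoff \subset \bar\Omega^c$, the function $\cutop \Wo_T f$ is the zero extension of $\cutoff \cdot \Wo_{T,\Omega} f$ to all of $\R^n$, so boundedness of $\timereversal_T$ on $H^1(\R^n)$ gives $\|\Ao f\|_{H^1(\Omega)} \leq C_2 \|\Wo_{T,\Omega} f\|_{H_0^1(\Omega^c)}$.

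\textbf{Removing the compact term and main obstacle.} Suppose the desired inequality fails; then there exists a sequence $f_n \in H_0^1(\Omega)$ with $\|f_n\|_{H_0^1(\Omega)} = 1$ and $\|\Wo_{T,\Omega} f_n\|_{H_0^1(\Omega^c)} \to 0$. Compactness of $\Ko$ allows passage to a subsequence along which $\Ko f_n$ converges, and the a priori estimate then makes $\{f_n\}$ Cauchy in $H_0^1(\Omega)$; its limit $f$ has unit norm and satisfies $\Wo_{T,\Omega} f = 0$, contradicting Theorem~\ref{T:injectivity} (whose hypothesis $T > \diam(\Omega)/2$ is assured by Condition~\ref{basiccond}, which implies $\diam(\Omega) \leq T_0 < 2T$). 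The main anticipated obstacle is choosing $\cutoff$ with support strictly inside $\bar\Omega^c$ while simultaneously keeping the symbol uniformly elliptic: this rests on the uniform escape distance $\epsilon_0$, which in turn requires interpreting the non-trapping condition carefully enough to rule out geodesics whose endpoints $x_\pm$ both approach $\partial\Omega$. A subordinate technical point is ensuring the pseudodifferential calculus applies cleanly to $H_0^1(\Omega)$-functions extended by zero to $\R^n$, which is standard because $\cutoff$ vanishes on $\bar\Omega$ and $\timereversal_T$ is bounded on $H^1(\R^n)$.
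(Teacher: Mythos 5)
Your proposal is correct and follows essentially the same route as the paper: choose a cutoff $\cutoff$ supported in the exterior and equal to $1$ far enough from $\Omega$ so that, by Proposition~\ref{prop:symbol} and the visibility condition $T>T_0/2$, the operator $\timereversal_T\cutop\Wo_{T,\Omega}$ is elliptic of order zero with symbol at least $1/4$; then combine the elliptic estimate with boundedness of the time-reversal (via energy conservation and the compact support of the output) and remove the lower-order term using injectivity plus compactness. The only cosmetic difference is that you phrase the last step through an explicit parametrix $\Bo\Ao=\Id+\Ko$ and a contradiction argument, whereas the paper keeps the remainder as $\norm{f}_{L^2(\Omega)}$ and cites \cite[Proposition 5.3.1]{taylor1996partial}, which encapsulates the same compactness--injectivity reasoning.
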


\begin{proof} 
Since $T > T_0 /2 $, there exists $a>0$ such that for all $(x,\xi) \in T^* \Omega$ either $x_+(x,\xi)  \in \Omega_a^{(2)}$ or $x_-(x,\xi) \in \Omega_a^{(2)}$.
Let $0 \leq \cutoff \in C_0^\infty (\R^n)$ be such that $\cutoff \equiv 0$ on $\Omega$ and $\cutoff \equiv 1$ on $\Omega_a^{(2)}$. Then $\cutop \Wo_{T,\Om} = \cutop \Wo_T $ and thus  Proposition~\ref{prop:symbol} implies that $\timereversal_T \cutop  \Wo_{T,\Om} $ is a pseudo-differential operator with  principal symbol $(\cutoff(x_+(x,\xi))  + \cutoff(x_-(x,\xi)))/4 \geq 1/4$.
Therefore,
\begin{equation*}
\forall f \in H_0^1(\Omega)\colon \quad 
	\norm{f}_{H^1(\Omega)} \leq C_1 \, \bigl( \snorm{ \timereversal_T \cutop \Wo_{T, \Omega} f }_{H^1(\Omega)}  + \norm{f}_{L^2(\Omega)} \bigr) \,.
\end{equation*}
Because $\timereversal_T  \cutop \Wo_{T, \Omega} f$ is supported  inside $\Omega^{(1)}_{2T}$ for some constant $C_2 >0$ we have  
\begin{align*}
\|\timereversal_T \cutop \Wo_{T, \Omega} f\|_{H^1(\Omega)} 
& \leq 
\| \timereversal_T  \cutop \Wo_{T, \Omega} f\|_{H^1(\R^n)}
\\  
& \leq C_2 \, \|  \timereversal_T  \cutop \Wo_{T, \Omega} f\|_{H_0^1(\R^n)} 
\\&  
\leq C_2 \, \| \cutop \Wo_{T, \Omega} f\|_{H_0^1(\R^n)} \,. 
\end{align*} 
Above, the last inequality comes from the conservation of the energy $  \int_{\R^n}  \bigl[  c^{-2} \abs{\partial_t p (\edot,t)}^2 + \abs{\nabla p \,(\edot,t)}^2 \bigr] \, \mathrm{d} x$ for  \eqref{eq:timerev}. 
From the last two displayed equations we  conclude 
\begin{equation} \label{eq:stabilityaux}
\forall f \in H_0^1((\Omega))\colon \quad 
\|f \|_{H_0^1(\Omega)} \leq C_1\, \bigl( C_2 \, \|\Wo_{T, \Omega} f\|_{H_0^1(\Omega^c)} + \norm{f}_{L^2(\Omega)} \bigr)  \,.
\end{equation}

Since $\Wo_{T, \Omega}$ is injective, and the embedding $H_0^1(\Omega) \to L_2(\Omega) \colon f \mapsto f$ is compact, applying \cite[Proposition 5.3.1]{taylor1996partial} to \eqref{eq:stabilityaux} concludes  the proof. \end{proof}

Let us briefly discuss the condition that $T > T_0 / 2$ posed in Theorem \ref{T:stability}. It implies for any $(x,\xi) \in T^* \Omega$, at least either $x_+(x,\xi)=\gamma_{x,\xi}(T)$ or $x_-(x,\xi)= \gamma_{x,\xi}(-T)$ belongs to $\Omega^c$. That is, if $(x,\xi) \in \WF(f)$ then either $(x_+(x,\xi), \xi_+(x,\xi) \coloneqq \gamma'_{x,\xi}(t)) \in \WF(\Wo_{T, \Omega} f)$ or $(x_-(x,\xi), \xi_-(x,\xi) \coloneqq \gamma'_{x,-\xi}(t)) \in \WF(\Wo_{T, \Omega} f)$. We, hence, say that all the singularities of $f$ are observed by $\Wo_{T, \Omega} f$. Therefore, $T > T_0/2$ is called the  visibility condition. We will always assume it in our subsequent presentation.

\section{Iterative time-reversal} \label{sec:algorithm}
Consider the extension operator $\Eo_\Omega  \colon H^1(\Omega^c) \to H^1(\R^n)$ as follows. For any $g\in H^1(\Omega^c)$, $\Eo_\Omega(g)$ restricted to $\Omega$ is given by the solution of the Dirichlet problem
\begin{equation}\label{eq:laplace}
\left\{ \begin{aligned}
    &\Delta h = 0   && \text { in }   \Omega \\ 
     &h =  g|_{\partial \Omega} && \text { on }   \partial \Omega \,.
\end{aligned}
\right. 
\end{equation}
Here, $g |_{\partial \Omega} \in H^{1/2} (\partial \Omega) $ denotes the trace of $g \in H^1(\Omega^c)$ on $\partial \Omega$.  Note that the Dirichlet interior problem \eqref{eq:laplace} has a unique solution $h \in H^1(\Omega)$ (see, for example, \cite{mclean2000strongly}); therefore, noting that $\Eo_\Omega(g) =g$ on $\Omega^c$, $\Eo_\Omega(g) \in H^1(\R^n)$. For notational conveniences, we sometimes  use the short-hand notation $\bar g$ for $\Eo_\Omega(g)$.  We further define the orthogonal projection $\Po_\Omega  \colon  H^1(\Omega) \to H_0^1(\Omega) \colon g \mapsto  g - h$, where $h\in H^1(\Omega)$ is the solution of  \eqref{eq:laplace}.

Recall that our aim  is the inversion of the  restricted single time wave inversion operator   $\Wo_{T, \Omega} \colon  H_0^1(\Omega) \to H^1(\bar \Omega^c)$  defined  by  \eqref{defAoe}.  Our proposed inversion approach is  based on the \emph{modified time-reversal} operator  defined by    
\begin{equation*} 
	\mtimereversal_{T,\Omega}  \coloneqq    \Po_\Omega   \timereversal_T    \Eo_\Omega \colon  H^1(\Omega^c) \to H^1_0(\Omega) \,.
\end{equation*} 
The modified time-reversal operator  is itself the composition of harmonic extension $\Eo_\Omega$ to $\R^n$, time-reversal  $\timereversal_T$ defined by (\ref{defLambda}) and  projection $\Po_\Omega$ onto $H^1_0(\Omega)$.

\subsection{Contraction property}

In the case that  the sound speed is constant, space  dimension is odd and the  measurement time satisfies $T > T_0$, we have $\mtimereversal_{T,\Omega} =  \timereversal_T$. In particular, in the one dimensional space, the operator $2 \, \mtimereversal_{T,\Omega} $ is the exact inverse of $\Wo_{T,\Omega}$. In the general case this is not true.  Nevertheless, as the basis our approach, we will show  that  the error operator $ \Io -   \la \, \mtimereversal_{T,\Omega}   \Wo_{T,\Omega}  $ is non-expansive for $ \lambda = 2$  and  a contraction for $\la < 2$.    This will serve as the basis of the proposed iterative time-reversal procedure.  

Throughout the following we denote by  $\en_\e(t)  \coloneqq  \int_{\R^n}  \bigl[  c^{-2}(x) \abs{\partial_t \e (x,t)}^2 + \abs{\nabla \e \,(x,t)}^2 \bigr] \,\mathrm{d} x$ the energy associated to  a function $\e$ satisfying  the wave equation $\partial_t^2 \e  = c^2 \,\Delta \e $. 

\begin{theorem}[Contraction property of the error operator] \label{theo:main}
Suppose $T> T_0/2 $ and consider for any  $\lambda \in (0,2]$ the error operator 
\begin{equation*} 
	\Ko_{T,\Omega,\la} 
	\coloneqq \Io -   \la \, \mtimereversal_{T,\Omega}    \Wo_{T,\Omega}  
	\colon  H_0^1(\Omega) \to H_0^1(\Omega) \,.
\end{equation*} 
Then the following hold: 
\begin{enumerate}[label=(\alph*)] 
\item \label{ele1} $\Ko_{T,\Omega,2}$ satisfies $\forall f \in H^1_0 \setminus\set{0}  \colon \|\Ko_{T,\Omega,2} f\| < \norm{f}$. 
\item \label{ele2}   If $\lambda \in (0,2)$, then  $\|\Ko_{T,\Omega,\la}\| < 1$. 
\end{enumerate}
\end{theorem}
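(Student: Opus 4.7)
The plan is to reduce both parts to a single energy identity that expresses $\langle f, \mtimereversal_{T,\Omega} \Wo_{T,\Omega} f \rangle_{H_0^1(\Omega)}$ as the Dirichlet energy of the harmonic extension of the data. Fix $f \in H_0^1(\Omega)$, let $p$ solve \eqref{forward}, set $g \coloneqq p(\cdot,T)|_{\bar\Omega^c}$ and $\bar g \coloneqq \Eo_\Omega g$, let $q$ be the time-reversed solution of \eqref{eq:timerev} with terminal data $\bar g$, and write $r \coloneqq q(\cdot,0)$; let $h_0$ be the harmonic extension of $r|_{\partial\Omega}$ into $\Omega$, so $\mtimereversal_{T,\Omega} \Wo_{T,\Omega} f = r|_\Omega - h_0$. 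Applying energy conservation to the difference wave $u \coloneqq p - q$ on $\R^n \times [0,T]$ and exploiting that $u(\cdot,T)$ vanishes on $\Omega^c$, that $\bar g$ is harmonic in $\Omega$ with the same trace on $\partial\Omega$ as $p(\cdot,T)$ (so integration by parts gives $\int_\Omega \nabla p(\cdot,T)\cdot \nabla \bar g\,\mathrm{d} x = \int_\Omega|\nabla \bar g|^2\,\mathrm{d} x$), and that the energies of $p$ and $q$ are separately conserved, I will obtain
\begin{equation*}
    \langle f, \mtimereversal_{T,\Omega} \Wo_{T,\Omega} f \rangle_{H_0^1(\Omega)} = \int_{\R^n}|\nabla \bar g|^2\,\mathrm{d} x\,.
\end{equation*}

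Combining this identity with the orthogonal decomposition $\int_\Omega|\nabla r|^2\,\mathrm{d} x = \|\mtimereversal_{T,\Omega} \Wo_{T,\Omega} f\|^2 + \int_\Omega|\nabla h_0|^2\,\mathrm{d} x$ and the energy conservation $\int_{\R^n}|\nabla \bar g|^2\,\mathrm{d} x = \int_{\R^n}|\nabla r|^2\,\mathrm{d} x + \int_{\R^n}c^{-2}|\partial_t q(\cdot,0)|^2\,\mathrm{d} x$ produces the defect formula
\begin{equation*}
    \langle f, \mtimereversal_{T,\Omega} \Wo_{T,\Omega} f \rangle - \|\mtimereversal_{T,\Omega} \Wo_{T,\Omega} f\|^2 = \int_{\R^n}c^{-2}|\partial_t q(\cdot,0)|^2\,\mathrm{d} x + \int_{\Omega^c}|\nabla r|^2\,\mathrm{d} x + \int_\Omega|\nabla h_0|^2\,\mathrm{d} x\,,
\end{equation*}
whose right hand side is manifestly non-negative. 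For \ref{ele1}, I expand $\|\Ko_{T,\Omega,2} f\|^2 = \|f\|^2 - 4\bigl(\langle f, \mtimereversal_{T,\Omega} \Wo_{T,\Omega} f \rangle - \|\mtimereversal_{T,\Omega} \Wo_{T,\Omega} f\|^2\bigr)$, so the desired inequality is immediate and strict whenever the defect is positive. If the defect vanishes, then $\partial_t q(\cdot,0) \equiv 0$ and $r$, being constant on each component of $\Omega^c$ and compactly supported, vanishes on $\Omega^c$; then the forward wave $\tilde q(x,t) \coloneqq q(x,T-t)$ has initial data $\bar g$ and satisfies $\tilde q(\cdot,T)|_{\Omega^c} = \partial_t\tilde q(\cdot,T)|_{\Omega^c} = 0$, so Lemma~\ref{lemma:1} forces $\bar g = 0$, hence $g = 0$, and Theorem~\ref{T:injectivity} yields $f = 0$.

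For \ref{ele2}, the defect formula gives $\|\mtimereversal_{T,\Omega} \Wo_{T,\Omega} f\|^2 \leq \langle f, \mtimereversal_{T,\Omega} \Wo_{T,\Omega} f \rangle$, so
\begin{equation*}
    \|\Ko_{T,\Omega,\lambda} f\|^2 = \|f\|^2 - 2\lambda\langle f, \mtimereversal_{T,\Omega} \Wo_{T,\Omega} f \rangle + \lambda^2\|\mtimereversal_{T,\Omega} \Wo_{T,\Omega} f\|^2 \leq \|f\|^2 - \lambda(2-\lambda)\langle f, \mtimereversal_{T,\Omega} \Wo_{T,\Omega} f\rangle\,.
\end{equation*}
Theorem~\ref{T:stability} together with $\int_{\Omega^c}|\nabla g|^2\,\mathrm{d} x \leq \int_{\R^n}|\nabla \bar g|^2\,\mathrm{d} x = \langle f, \mtimereversal_{T,\Omega} \Wo_{T,\Omega} f \rangle$ produces a constant $C > 0$ with $\langle f, \mtimereversal_{T,\Omega} \Wo_{T,\Omega} f\rangle \geq \|f\|^2/C$, giving $\|\Ko_{T,\Omega,\lambda} f\|^2 \leq (1-\lambda(2-\lambda)/C)\|f\|^2$ with coefficient strictly less than one for every $\lambda \in (0,2)$. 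The main obstacle I expect is executing the energy balance for $u = p - q$ cleanly, tracking the boundary terms generated by the harmonic extension so that the right hand side collapses to $\int_{\R^n}|\nabla \bar g|^2\,\mathrm{d} x$—the whole argument hinges on this identity emerging in precisely this form.
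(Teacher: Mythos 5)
Your proof is correct, and it reaches the conclusion by a route that is organized quite differently from the paper's, even though the underlying ingredients coincide (energy conservation for $p-q$, harmonicity of $\Eo_\Omega g$ in $\Omega$, the orthogonality built into $\Po_\Omega$, Lemma~\ref{lemma:1} plus Theorem~\ref{T:injectivity} for the equality case, and Theorem~\ref{T:stability} for the spectral gap). The paper runs two separate energy computations — one with terminal data $2\bar g$ to handle $\lambda=2$, one with terminal data $\bar g$ to handle $\lambda=1$ — and then covers the remaining $\lambda\in(0,2)$ by writing $\Ko_{T,\Omega,\la}$ as a convex combination of $\Io$, $\Ko_{T,\Omega,1}$ and $\Ko_{T,\Omega,2}$ and applying the triangle inequality. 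You instead distill everything into the single quadratic-form identity $\langle f, \mtimereversal_{T,\Omega}\Wo_{T,\Omega} f\rangle_{H_0^1(\Omega)}=\int_{\R^n}|\nabla\bar g|^2\,\mathrm{d}x$ (which is exactly the conserved polarized energy $\int_{\R^n}[c^{-2}\partial_t p\,\partial_t q+\nabla p\cdot\nabla q]\,\mathrm{d}x$ evaluated at $t=T$ instead of $t=0$) together with the nonnegative defect $\langle f,Af\rangle-\|Af\|^2$, after which every $\lambda\in(0,2]$ is dispatched by expanding $\|f-\lambda Af\|^2$. This buys a genuinely uniform treatment: the strict contraction for all $\lambda\in(0,2)$ drops out of $\lambda(2-\lambda)>0$ and the coercivity $\langle f,Af\rangle\geq\|f\|^2/C^2$, with no interpolation step, and the identity makes transparent why the scheme behaves like a gradient/Landweber iteration for a nonnegative operator. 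Your verification of the pivotal identity is sound — the boundary terms do collapse because $p(\cdot,T)-\bar g\in H_0^1(\Omega)$ is tested against the harmonic $\bar g$. One small shared caveat: in the equality case you deduce that $r=q(\cdot,0)$ vanishes on $\Omega^c$ from $\nabla r=0$ there plus compact support, which settles the unbounded component of $\Omega^c$ but tacitly assumes $\Omega^c$ has no bounded components; the paper's own argument relies on the same point, so this is not a defect of your proposal relative to the reference proof.
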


\begin{proof}  
\mbox {}\ref{ele1}: 
For $f \in H_0^1(\Omega)$, set $g \coloneqq \Wo_{T,\Omega} f$ and $\bar g \coloneqq \Eo_\Om g$. Moreover, let $p$ solve the forward problem~\eqref{forward} and let $q$ solve the time-reversal  problem~\eqref{eq:timerev} with $h = 2 \bar g$. Then $\e \coloneqq p - q$ satisfies the wave equation  $\partial_t^2 \e  = c^2 \,\Delta \e $  and the corresponding energies at times $0$ and  $T$ respectively satisfy  
\begin{align}
      \en_\e(0) &= 
      \int_{\R^n}  \left[ c^{-2}(x)\,|  \partial_t q (x,0)|^2
      + \sabs{ \nabla  q(x,0) - \nabla f(x) }^2 \right] \,\mathrm{d} x,  \nonumber 
\\   \label{eq:ene2}
 \en_\e(T) &= \int_{\R^n}  \left[ c^{-2}(x)\,|\partial_t p (x,T)|^2 + \sabs{2   \nabla \bar g (x) - \nabla g (x) }^2 \right] \,\mathrm{d} x  \,.
\end{align}
The trace extension  satsfies   $\bar g |_{\partial \Omega} = g |_{\partial \Omega} $ and $(\Delta \bar g) |_\Omega =0$. Therefore   
\begin{equation*}
       \int_{\Omega} (\sabs{2 \nabla \bar g - \nabla g }^2 - \abs{\nabla g}^2) \, \mathrm{d} x
          = 4  \int_{\Omega} \inner{\nabla \bar g  }{ \nabla ( \bar g- g) } \,\mathrm{d} x
           = 4 \int_{\Omega} \Delta \bar g \, ( \bar g  -g)  \, \mathrm{d} x
          = 0 \,.
\end{equation*} 
We obtain $ \int_{\Omega} \sabs{2 \nabla \bar g - \nabla g }^2 \, \mathrm{d} x= \int_{\Omega} \abs{\nabla g}^2 \, \mathrm{d} x$ and from \eqref{eq:ene2} we deduce $\en_\e(T) = \en_p(T) $.
With the  conservation of energy we have  $\en_p(0) =  \en_\e(0)  $ and therefore 
\begin{equation}\label{intrelf}
       \int_{\R^n} \abs{ \nabla f(x) }^2 \,\mathrm{d} x 
        = \int_{\R^n}  \left[ c^{-2}(x) \, \abs{ \partial_t q  (x,0)}^2 
                         + \abs{\nabla  q(x,0) - \nabla f(x) } ^2 \right] \,\mathrm{d} x \,,
\end{equation}
where we have used the explicit expressions for $\en_p(0) $ and $\en_\e(0) $ respectively.

With  $f^* \coloneqq 2  \mtimereversal_{T,\Omega}   \Wo_{T,\Omega} f $ the error operator satisfies  $\Ko_{T,\Omega,2} f = f - f^*$.  Moreover, writing  $q_0 \coloneqq  q(\cdot,0)|_\Omega$ we have $f^* = \Po_\Omega (q_0)$  and thus  $\Delta [ q_0 - f^*] = 0$ in  $\Omega$. From this we infer
 $ \int_\Omega  \inner{   \nabla q_0 -  \nabla f^* }{ \nabla f^* - \nabla f }  \, \mathrm{d} x
     = -\int_\Omega  \inner{ \Delta q_0 - \Delta  f^* }{ f^*  - f }  \, \mathrm{d} x \ 
     = 0$ and therefore 
\begin{equation} \label{eq:est2}
     \int_\Omega  \abs{ \nabla q_0 - \nabla f } ^2  \, \mathrm{d} x
     = \int_\Omega  \abs{ \nabla q_0 - \nabla f^* } ^2 \, \mathrm{d} x
     + 
     \int_\Omega  \abs{ \nabla f^* - \nabla f } ^2 \, \mathrm{d} x
     \geq  \int_\Omega  \abs{ \nabla f^* - \nabla f } ^2 \, \mathrm{d} x.
 \end{equation}
Together  with~(\ref{intrelf}) this implies 
$ \snorm{f}^2_{H_0^1(\Omega)}  \geq \norm{f - f^*}^2_{H_0^1(\Omega)} $ and therefore
$\snorm{ \Ko_{T,\Omega,2} f }= \|f-f^*\|_{H_0^1(\Omega)} \leq  \snorm{f}_{H_0^1(\Omega)} $.

In remains to show the strict inequality. To that end assume $ \|f-f^*\|_{H_0^1(\Omega)} =   \norm{f}_{H_0^1(\Omega)} $. From (\ref{intrelf}) and \eqref{eq:est2} we obtain
\begin{equation*}
       \int_\Omega  \abs{ \nabla q_0 - \nabla f } ^2  \, \mathrm{d} x 
        \geq  \int_{\R^n}  \left[ c^{-2}(x) \, \abs{ \partial_t q  (x,0)}^2 
                         + \abs{\nabla  q(x,0) - \nabla f(x) } ^2 \right] \,\mathrm{d} x \,,
\end{equation*}
 and therefore 
 $$ \int_{\R^n}  c^{-2}(x)\,|\partial_t q (x,0)|^2 \,\mathrm{d} x  
                           + \int_{\Omega^c}  |\nabla q(x,0)|^2 \,\mathrm{d} x  =0 \,.$$
In particular, $\partial_t q (\cdot,0)$  vanishes on $\R^n$ and $\nabla q(\cdot,0)$ vanishes on $\Omega^c$. Because $q(x,0)$ vanishes for $x \in \Omega^{(1)}_{2T}$, it follows that  $q(\cdot,0)$ vanishes on $\Omega^c$. 
Applying Lemma~\ref{lemma:1} for $u(\cdot,t) \coloneqq q(\cdot,T-t)$ yields $2 \bar g = q(\cdot,T) = u(\cdot,0) = 0$ on $\R^n$. In particular, $\Wo_{T, \Omega} f = 0$ on $\Omega^c$. From Theorem~\ref{T:injectivity}, we infer $f=0$  on $\R^n$, which concludes the proof.

\ref{ele2}:  
Let us first consider the case $\lambda=1$.  We have to show that there exists a  constant $L < 1$ such that $\snorm{ \Io - \Wo_{T, \Omega}^\sharp \Wo_{T, \Omega})   } \leq L $.
To that end, let $f \in H^1_0(\Omega)$, $p$ solve the forward model~\eqref{forward} with initial data $f$, $q$ solve the time-reversal problem \eqref{eq:timerev} with initial data $h = \Eo_\Omega \Wo_{T, \Omega} f $ and define the error term $\e \coloneqq q - p$.    
The error term satisfies the wave equation 
$\partial_t^2 \e - c^2 (x) \Delta \e = 0 $ in $\R^n\times (0,T)$ and its energy at time $T$ is given by    
\begin{align*}
     \en_\e(T)  &=  \int_{\R^n}  \bigl[  c^{-2}(x)\,|\partial_t \e (x,T)|^2 + |\nabla \e \,(x,T)|^2 \bigr] \,\mathrm{d} x
     \\ &= \int_{\R^n}  c^{-2}(x) \abs{ \partial_t p (x,T)}^2 \,\mathrm{d} x 
     + \int_{\R^n} \abs{\nabla \bar{g}(x) - \nabla g (x) }^2  \,\mathrm{d} x \,.
\end{align*}
Here for the second equality we used the conditions   $q(\edot,T) = \bar g \coloneqq \Eo_\Omega g$ and $\partial_t g(\edot ,T) = 0$ and the abbreviation $g  =  p( \edot ,T) $.

The second term  in the above equation displayed satisfies  
\begin{multline*}
\int_{\R^n}  \abs{\nabla \bar{g}(x) - \nabla g (x) }^2  \,\mathrm{d} x =
\\
     \begin{aligned}
      &=      \int_{\R^n}  \inner{\nabla (g(x) - \bar{g} (x)) }{ \nabla (g(x) - \bar{g} (x)) } \,\mathrm{d} x 
     \\ 
     &=  \int_{\R^n}  \inner{\nabla (g(x) - \bar{g} (x)) }{ \nabla (g(x) + \bar{g} (x)) } \,\mathrm{d} x  
     - 2 \int_{\R^n}  \inner{\nabla (g(x) - \bar{g} (x)) }{ \nabla \bar{g} (x)  } \,\mathrm{d} x 
     \\ 
     &=  \int_{\Omega}  \abs{\nabla g (x)}^2  \,\mathrm{d} x 
          - \int_{\Omega} \abs{ \nabla \bar g(x)}^2 \,\mathrm{d} x  
          - 2 \int_{\R^n}  \bigl( g(x) - \bar{g} (x) \bigr) \, \Delta \bar{g} (x)  \,\mathrm{d} x 
       \\ 
       &= \int_{\Omega}  \abs{\nabla g (x)}^2  \,\mathrm{d} x 
         - \int_{\Omega} \abs{\nabla \bar g(x)}^2  \,\mathrm{d} x 
         \\
         & \leq \int_{\Omega}  \abs{\nabla g (x)}^2  \,\mathrm{d} x \,.
\end{aligned}
\end{multline*}
As a consequence, we obtain 
\begin{equation*}
     \begin{aligned}
     \en_\e(T) 
        &\leq \int_{\R^n}  \bigl[ c^{-2}(x) \abs{ \partial_t p (x,T)}^2 
             + \abs{\nabla g(x) }^2 \bigr] \,\mathrm{d} x 
              - \int_{\Omega^c}  \abs{\nabla g (x)}^2  \,\mathrm{d} x \, 
              \\&= 
              \int_{\R^n}  \bigl[ c^{-2}(x) \abs{\partial_t p (x,T)}^2 
               + \abs{\nabla p(x,T)}^2 \bigr] \,\mathrm{d} x 
               - \| \Wo_{T, \Omega} f\|^2_{H_0^1(\Omega^c)}  
               \\&= \en_p(T)- \|\Wo_{T, \Omega} f\|^2_{H_0^1(\Omega^c)} \,.
\end{aligned}
\end{equation*}
Together with the conservation of energy and using the initial conditions $p(x,0) = f(x)$ and $\partial_t p(x,0) = 0$ this shows   
\begin{align*}
          \en_\e(0) +\|\Wo_{T, \Omega} f\|^2_{H_0^1(\Omega^c)}
          &=
          \en_\e(T) +\|\Wo_{T, \Omega} f\|^2_{H_0^1(\Omega^c)}  
          \\
          &\leq \en_p(T) = \en_p(0)
         = \norm{f}^2_{H_0^1(\Omega)}.
\end{align*}
Using that  $\en_\e(0)   = \int_{\R^n}  \left[ c^{-2}(x)\,|\partial_t q (x,0)|^2 + |\nabla q(x,0) - \nabla f(x)|^2 \right] \,\mathrm{d} x$  and applying Theorem~\ref{T:stability}  we obtain 
\begin{equation} \label{E:ineq} \int_\Omega \Big|\nabla q(x,0) - \nabla f(x)\Big|^2 \,\mathrm{d} x
  \leq \left( 1-\frac{1}{C^2} \right) \norm{f}^2_{H_0^1(\Omega)}.\end{equation}
The left hand side in the above equation can be estimated as  
\begin{align*}  
\int_\Omega \sabs{ \nabla q(x,0) - \nabla f(x)}^2 \,\mathrm{d} x  
& = 
\int_\Omega \sabs{ \nabla (q(\cdot,0) - \Po_\Omega (q(\cdot,0)) ) + \nabla (\Po_\Omega (q(\cdot,0)) - f) } ^2 \,\mathrm{d} x \\ 
& =\int_\Omega \sabs{ \nabla (q(\cdot,0) - \Po_\Omega (q(\cdot,0)))}^2 + \sabs{ \nabla (\Po_\Omega (q(\cdot,0)) - f) }^2 \,\mathrm{d} x 
\\& 
\geq \norm{\Po_\Omega (q(\cdot,0))  - f}^2_{H_0^1(\Omega)},
\end{align*} 
where we have used the fact that
$\int_\Omega   \inner{\nabla q(\cdot,0) - \nabla \Po_\Omega (q(\cdot,0))}{ \nabla \Po_\Omega (q(\cdot,0)) - \nabla  f}\,\mathrm{d} x = \int_\Omega \Delta [q(\cdot,0) - \Po_\Omega (q(\cdot,0))] \, (\Po_\Omega (q(\cdot,0)) - f)\,\mathrm{d} x =0$. From \ref{E:ineq}, we arrive at 
\begin{equation*} 
\|\Po_\Omega (q(\cdot,0))  - f\|^2_{H_0^1(\Omega)} = \| \Ko_1 f\|^2_{H_0^1(\Omega)} \leq  \left( 1-\frac{1}{C^2} \right) \norm{f}^2_{H_0^1(\Omega)} \,.
\end{equation*} 
This finishes the proof for the case $\lambda =1$. 

For  the general case note the identities   
\begin{equation*}
\Ko_{T,\Omega,\la}  =  
\begin{cases} 
(1 - \la )\,\Io + \la \Ko_{T,\Omega,1} & \text{  for } \lambda \in (0,1) 
\\
 (\la-1) \Ko_{T,\Omega,2} + (2-\la) \Ko_{T,\Omega,1} & \text{  for } \lambda \in (1,2) \,.
\end{cases}
\end{equation*}
Using the already verified estimates  $\snorm{\Ko_{T,\Omega,1}} < 1$ and $\snorm{\Ko_{T,\Omega,2}} \leq 1$, these equalities together with the  triangle inequality for the operator norm  show $\snorm{\Ko_{T,\Omega,\la}} <  1$ for all $\lambda \in (0,2)$. 
\end{proof}

\subsection{Neumann series solution}\label{sec:neumann}
According to Theorem~\ref{theo:main} the error operator  satisfies $\snorm{\Io -   \la \, \mtimereversal_{T,\Omega}    \Wo_{T,\Omega}}  <  1 $ for any $\lambda \in (0,2)$.  The  Neumann series $\sum_{j=0}^\infty (\Io -   \la \, \mtimereversal_{T,\Omega}\Wo_{T,\Omega})^j $, therefore, converges to $(\la \, \mtimereversal_{T,\Omega}\Wo_{T,\Omega})^{-1}$ with respect to the operator norm $\norm{\edot}$ in  $H^1_0(\Omega)$.   This results in the inversion formula 
\begin{equation}\label{NeumanRep}
   f =
         \sum_{j=0}^\infty ( \Io -   \la \, \mtimereversal_{T,\Omega}   \Wo_{T,\Omega})^j  \, (  \la \, \mtimereversal_{T,\Omega}   g)  \quad \text{ with } g = \Wo_{T, \Omega} f 
\end{equation}
valid for every initial data $ f  \in H^1_0(\Omega) $. 
Here $\mtimereversal_{T,\Omega} =   \Po_\Omega   \timereversal_T    \Eo_\Omega$ is the modified time-reversal operator formed by harmonic extension $\Eo_\Omega$ of the missing data, time-reversal $\timereversal_T$ defined by \eqref{eq:timerev} and projection $\Po_\Omega$ onto  $H^1_0$.  Inversion formula \eqref{NeumanRep} is  the  Neumann series solution for the inverse problem of full-field PAT. 

\begin{remark}[Iterative time-reversal algorithm]
The Neumann series is \eqref{NeumanRep} is the limit of its partial sums $f_k \coloneqq  \sum_{k=0}^j ( \Io -   \la \, \mtimereversal_{T,\Omega}  \Wo_{T,\Omega})^k    (  \la \, \mtimereversal_{T,\Omega}   g)$. These  partial sums satisfy the recursion 
\begin{equation} \label{eq:iter}
    \left\{
    \begin{aligned}
    f_0  &= \la \mtimereversal_{T,\Omega}  \,  g \\
    f_{j} &= f_{j-1}  - \la  \mtimereversal_{T,\Omega} ( \Wo_{T, \Omega} f_{j-1}  - g)  \,,
\end{aligned}
\right. 
\end{equation}
with $\mtimereversal_{T,\Omega} = \Po_\Omega   \timereversal_T    \Eo_\Omega$.
This is an  iterative algorithm producing a sequence $(f_j)_{j\in \N}$ converging to $f  =   \Wo^{-1}_{T,\Omega} g$  in   $H^1_0(\Omega) $.  We call  \eqref{eq:iter} iterative reversal algorithm for full field PAT.  The form  \eqref{eq:iter} will be used in the numerical solution. Because of the  contraction property  of the iteration $ \snorm{\Io -   \la \, \mtimereversal_{T,\Omega}  \Wo_{T,\Omega} } < 1 $  the iterative time-reversal reversal algorithm is linearly convergent. 
\end{remark}

We note that for  standard  PAT, the idea of using time-reversal was proposed in \cite{finch2004determining,burgholzer2007exact} for the case of constant sound speed, and in \cite{grun2008photoacoustic,hristova2008reconstruction} for non-constant sound speed. The Neumann series solution was first proposed in \cite{stefanov2009thermoacoustic} and further developed in \cite{stefanov2009thermoacoustic,stefanov2011thermoacoustic,tittelfitz2012thermoacoustic,homan2013multi,stefanov2015multiwave,nguyen2016dissipative,palacios2016reconstruction,katsnelson2018convergence,acosta2018thermoacoustic}.  Iterative reconstruction methods  for variable sound speed based on an adjoint wave equation have been studied in \cite{huang2013full,belhachmi2016direct,arridge2016adjoint,haltmeier2017analysis,javaherian2018continuous}.   
Uniqueness  and stability for standard PAT was studied in \cite{xu2004reconstructions,hristova2008reconstruction,stefanov2009thermoacoustic,stefanov2011thermoacoustic,nguyen2011singularities}, just to name a few.

\begin{figure}[htb!]
\centering 
\includegraphics[width=0.32\textwidth]{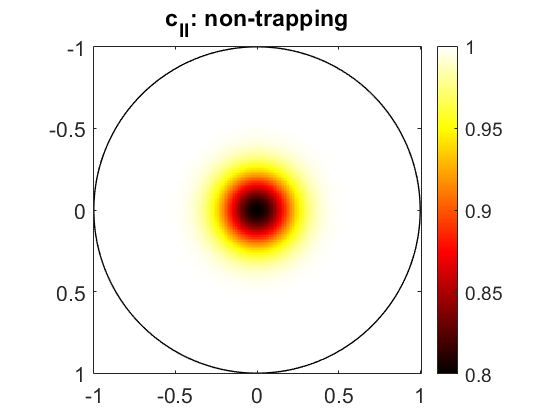}
\includegraphics[width=0.32\textwidth]{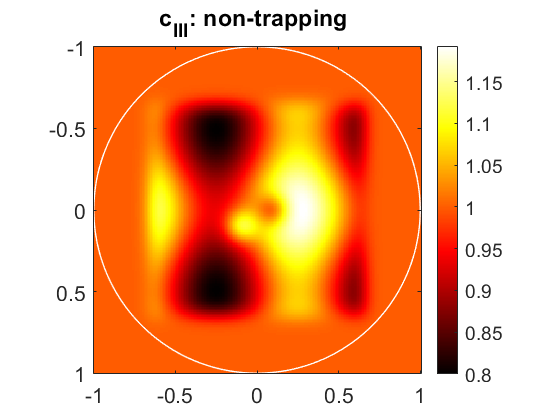}
\includegraphics[width=0.32\textwidth]{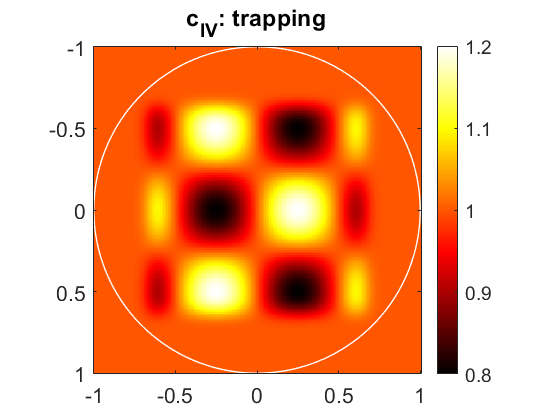} \\[0.2em]
\includegraphics[width=0.32\textwidth]{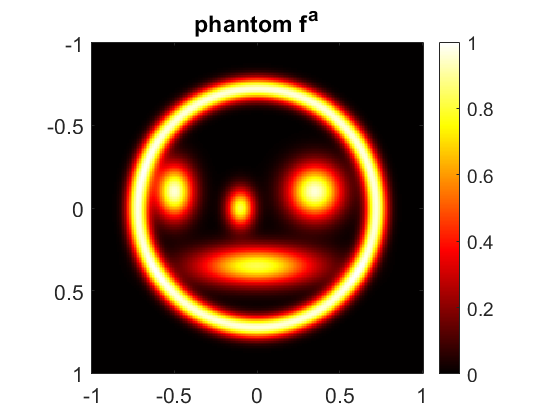}
\includegraphics[width=0.32\textwidth]{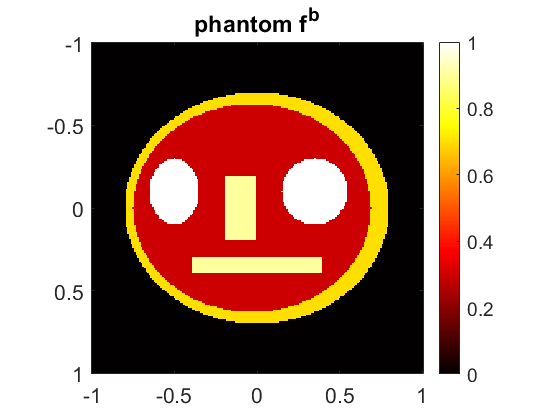}
\includegraphics[width=0.32\textwidth]{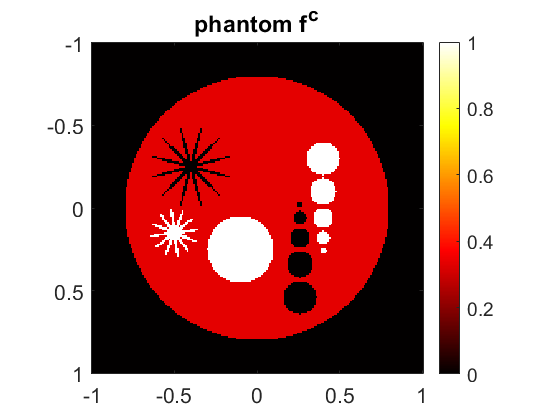}
\caption{\textbf{Sound speeds profiles and initial pressure distributions.}  Top: Three non-trapping ($c_{\rm II}$ and $c_{\rm III}$) and one trapping sound speed profile  $c_{\rm IV}$ that we employed in our simulations besides the constant speed of sound $c_{\rm I}=1$. The white and black circles visualize  the boundary of the imaging region which in our simulations is the unit disc. Bottom: A smooth phantom $f^a$ (left) and the two piecewise constant phantoms $f^b$ (middle) and $f^c$ (right) are employed in our numerical simulations.}
\label{fig:phantoms}
\end{figure}

\section{Numerical Simulations} \label{sec:numerics}

In this section, we present  some of our numerical studies for the exterior  single time wave transform. We consider the case of  two spatial dimensions, and take  $\Omega \subseteq \R^2  $ as the unit disc.   According to \eqref{NeumanRep} any function $f \in H^1_0 (\Omega)$ can be recovered from data $g = \Wo_{T, \Omega} f$ via the iterative time-reversal algorithm \eqref{eq:iter}.    The numerical realization is described in   the following subsection.  The numerical simulations were performed for each of the three sound speed profiles shown in the top row of Figure~\ref{fig:phantoms} and additionally for the constant sound speed $c_{\rm I} = 1$.   As phantom we used numerical approximations of one smooth and two piecewise constant functions which are visualized in the bottom row of Figure~\ref{fig:phantoms}. 

\subsection{Numerical implementation} \label{sec:realization}

In the numerical realization, any function $h \colon \R^2 \to \R$ is represented by a discrete vector $(h(x_{i}))_{i_1, i_2 = 0}^{N-1} \in \R^{N  \times N } $, where 
\begin{equation*}
	x_i = (-a,-a) + 2 i a/N \quad \mathrm{for}\, \quad  i  = (i_1, i_2)\in \set{0, \dots, N-1}^2 
\end{equation*}
are equidistant grid points in the square $[-a,a]^2$.  The discrete domain $I \subseteq   \set{0, \dots, N-1}^2$ (where the discrete initial pressure is contained in) is defined as the set of all indices $i$ with $x_i  \in \Omega$  and we set  $J  \coloneqq  \set{0, \dots, N-1}^2 \setminus I$. Following \cite{haltmeier2017iterative}, we define the discrete boundary of $I$ as the set of all elements $(i_1,i_2)  \in J$ for which at least one of the discrete neighbors $(i_1+1,i_2), (i_1-1,i_2), (i_1,i_2+1), (i_1,i_2-1)$ is  contained  in $I$. The discrete version of the initial data $f \in H^1_0(\Omega)$ is then an image $\fnum \in \R^I$ and the discrete version of the data $g \in L^2(\Omega^c)$  an image $\gnum \in \R^J$.

In the iterative time-reversal algorithm the forward transform $ \Wo_{T, \Omega}$ as well as each  factor  in the  modified time-reversal  $\mtimereversal_{T,\Omega} = \Po_\Omega   \timereversal_T    \Eo_\Omega$ are replaced by discrete approximations.  The discrete forward operator and the discrete time-reversal operator are  defined by 
\begin{align} \label{eq:fwdN}
	&\Wnum_{T, I}
	 \colon \R^{ I }  \to \R^{ J }
	 \colon \fnum  \mapsto (  \Wnum_T  \fnum )_{I^c}
	 \\
	 &\Wnum_{T, I}^\sharp
	 \colon \R^{ J } \to \R^{ I }
	 \colon \gnum \mapsto ( \Pnum_I    \Rnum_T   \Enum_I ) \gnum  \,. 
\end{align}
Here $ \Wnum_T  \colon \R^{N \times N} \to \R^{N \times N}$ and $ \Rnum_T  \colon \R^{N \times N} \to \R^{N \times N}$  are discrete analogs  of the forward wave equation  and its time reversed version, $\Enum_I$ a discretization of the harmonic extension  operator and $\Pnum_I$ a discretization of the projection  of 
the projection onto $H^1_0(\Om)$.        

The   numerical solution  of the wave equation $\Wnum_T \fnum$ and likewise  the numerical solution of the time reversed version $\Rnum_T$ are computed with the $k$-space method  \cite{bojarski1982k,cox2007k,mast2001k}.   We use the  $k$-space method  with periodic boundary conditions on the rectangle  $[-a,a]^2$ as described in  \cite{haltmeier2017analysis}. We choose $a \geq  T + 1$  such that  $\Wnum_{T, I}$ and   $\Wnum_{T, I}^\sharp $ are not  affected  by replacing the free space wave equation with its $(2a)$-periodic counterpart. The discrete harmonic extension $\Enum_I$ and  the discrete projection $\Pnum_I$  are constructed by numerically solving \eqref{eq:laplace} with the MATLAB-routine {\tt solvepde}.   

\begin{figure}[htb!]
\centering
\includegraphics[width=0.32\textwidth]{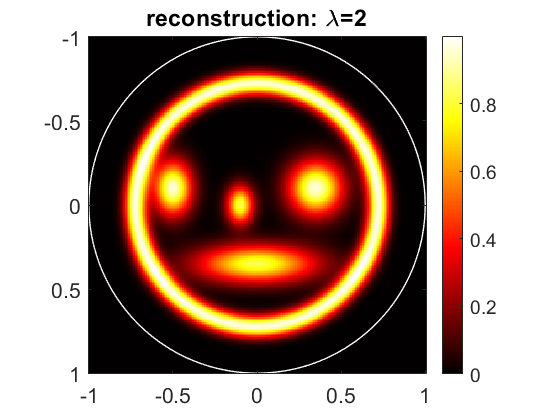}
\includegraphics[width=0.32\textwidth]{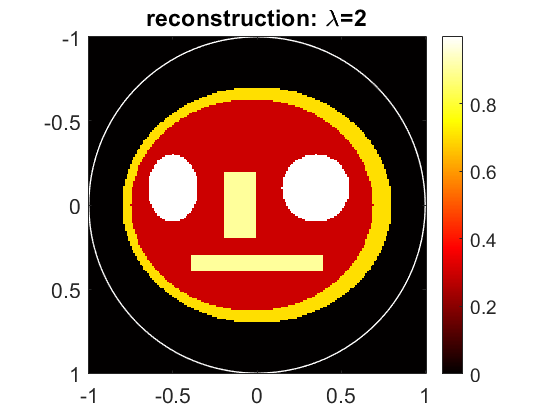}
\includegraphics[width=0.32\textwidth]{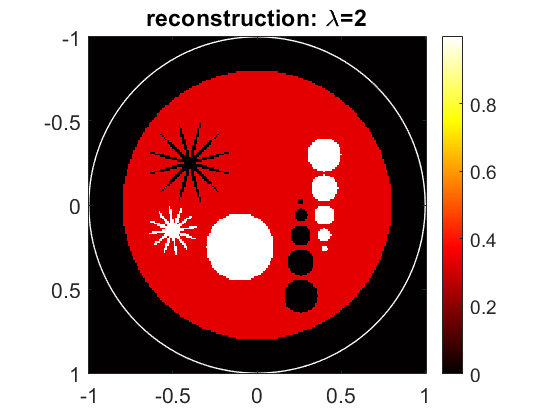} \\[0.2em]
\includegraphics[width=0.32\textwidth]{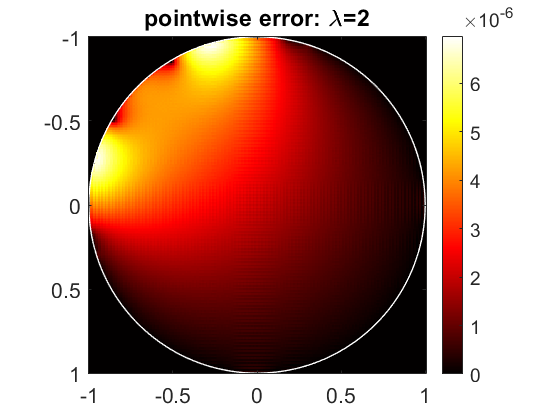}
\includegraphics[width=0.32\textwidth]{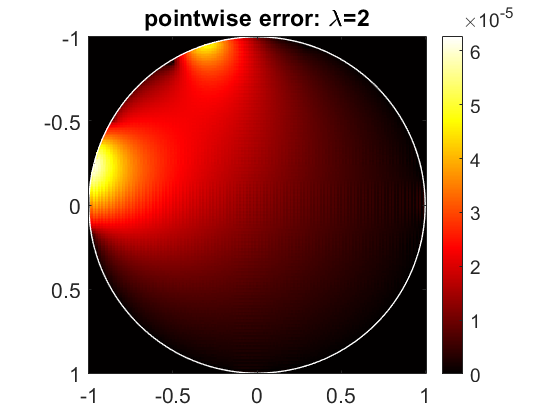}
\includegraphics[width=0.32\textwidth]{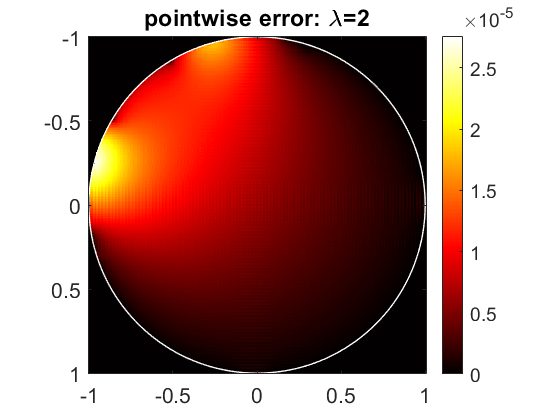} 
\caption{\textbf{Results for constant sound speed.} Reconstructions (top) and corresponding point-wise errors (bottom) using constant sound speed  $c_{\rm I}=1$.}
\label{fig:constant}
\end{figure}

\begin{figure}[htb!]
\centering
\includegraphics[width=0.32\textwidth]{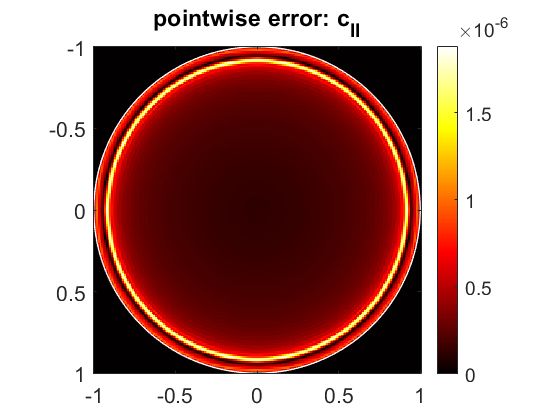}
\includegraphics[width=0.32\textwidth]{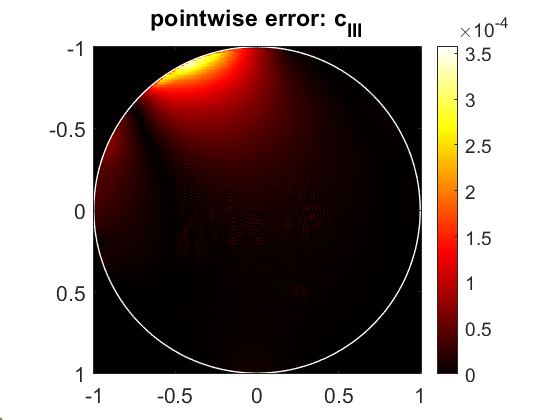}
\includegraphics[width=0.32\textwidth]{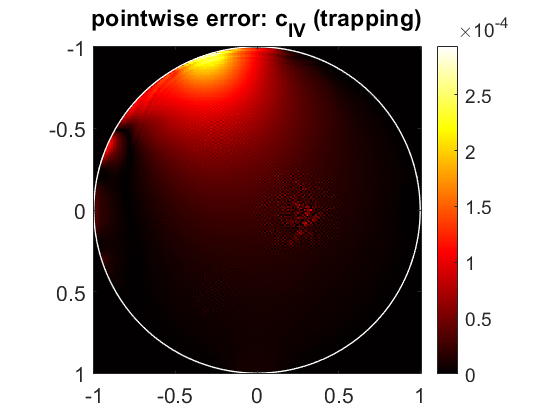} 
\caption{\textbf{Results for variable sound speed.} Point-wise errors (bottom) using using different sound speed profiles  the smooth phantom $f^a$.}
\label{fig:variable}
\end{figure}

\subsection{Numerical results}
We first present results to data $\gnum = \Wnum_T \fnum$ without added noise.  Figure~\ref{fig:constant} shows results with constant speed with relaxation parameter $\la =2$ and $80$ iterations. For smaller values of  $\la$ slightly better reconstructions have been obtained but required a slightly larger number  of iterations. Figure~\ref{fig:variable}
visualizes the pointwise error map $f^a - f^a_{\rm rec}$ for the non-constant sound speed profiles using  $\la=1/2$ and 
$T=2$. We see that accurate results are obtained for all sound speed profiles. The best results were obtained  for the sound speed profile $c_{\rm II}$,  and the error functions do not contain any visible information of the original phantom.  Because all reconstructions look equally well and very similar to the original phantom $f^a$, we did not visualize them here.  Additional simulations with other smooth and non-smooth phantoms indicate that smooth phantoms generally result in  better reconstructed than non-smooth ones.  
\begin{figure}[htb!]
\centering
\includegraphics[width=4.5cm,angle=0]{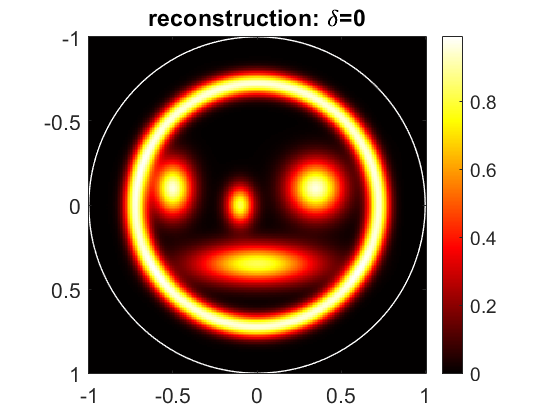}
\includegraphics[width=4.5cm,angle=0]{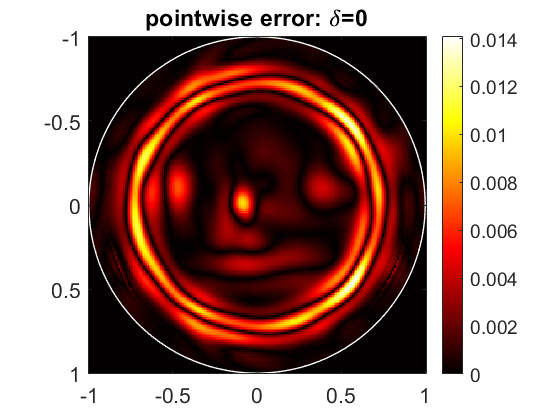}
\includegraphics[width=4.5cm,angle=0]{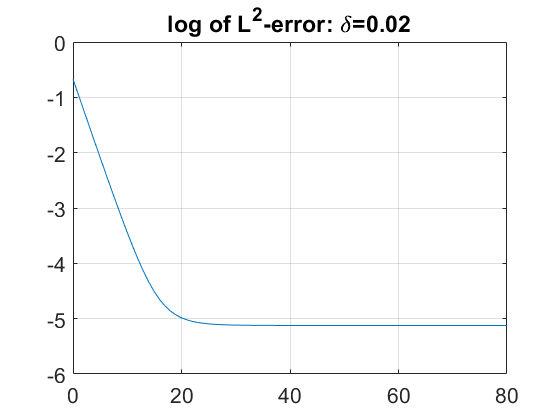}\\
\includegraphics[width=4.5cm,angle=0]{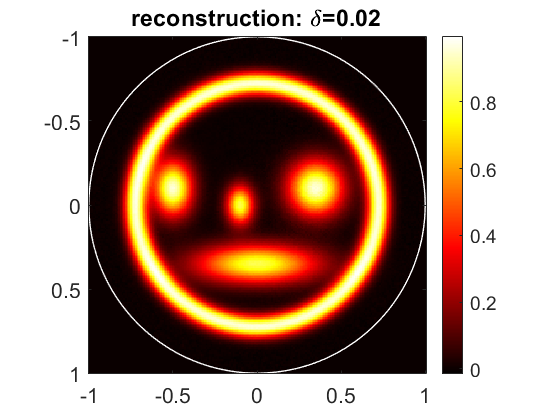}
\includegraphics[width=4.5cm,angle=0]{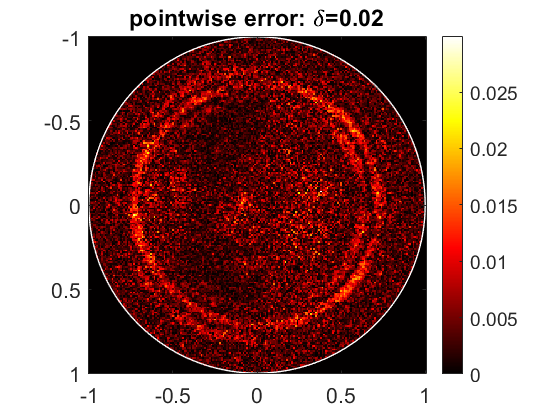}
\includegraphics[width=4.5cm,angle=0]{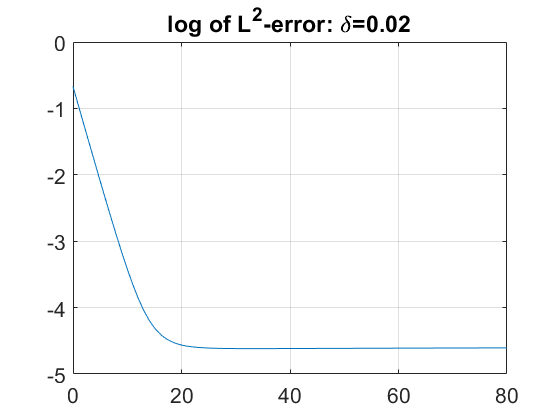}
\caption{\textbf{Exact versus noisy data for  sound speed $c_{\rm III}$.}  From left to right:  reconstruction, difference images to true phantom $f^a$, and logarithmic error plot in dependence of the number of iterations.   The top row shows results for exact data, the bottom row shows results for noisy data. Here  $\la = 1/2$ and $T=4$.}
\label{fig:non-trapping}
\end{figure}

\begin{figure}[htb!]
\centering
\includegraphics[width=4.5cm,angle=0]{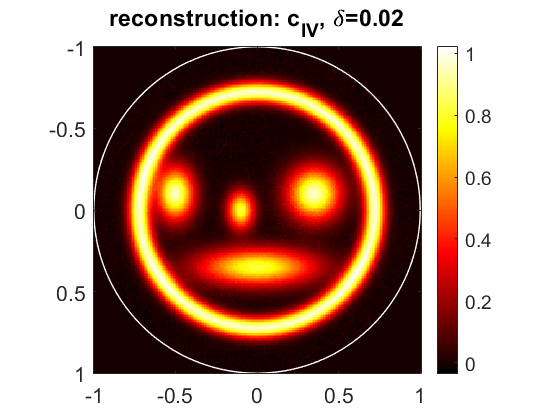}
\includegraphics[width=4.5cm,angle=0]{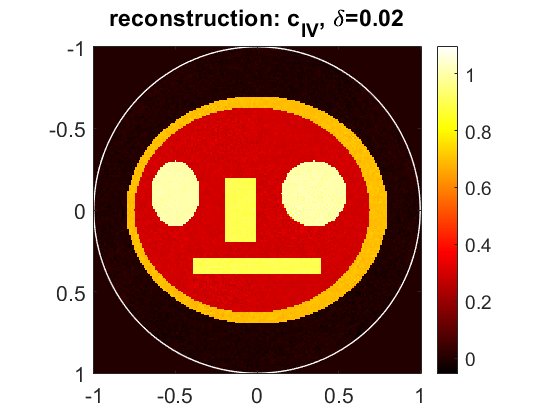}
\includegraphics[width=4.5cm,angle=0]{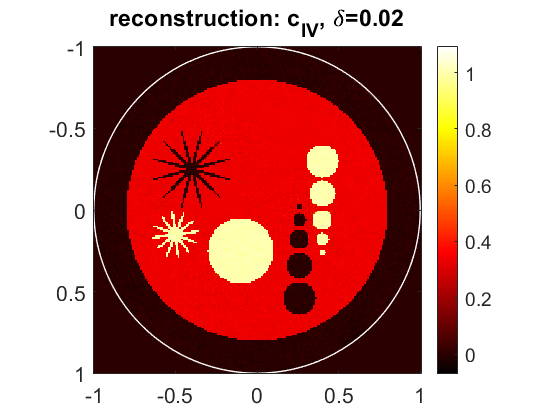}\\
\includegraphics[width=4.5cm,angle=0]{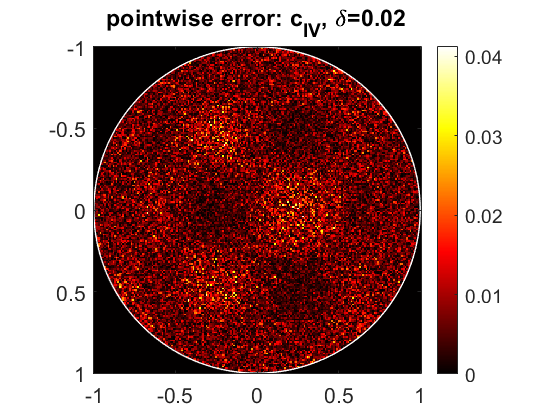}
\includegraphics[width=4.5cm,angle=0]{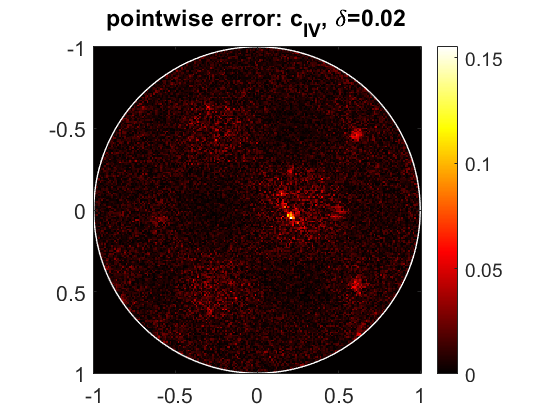}
\includegraphics[width=4.5cm,angle=0]{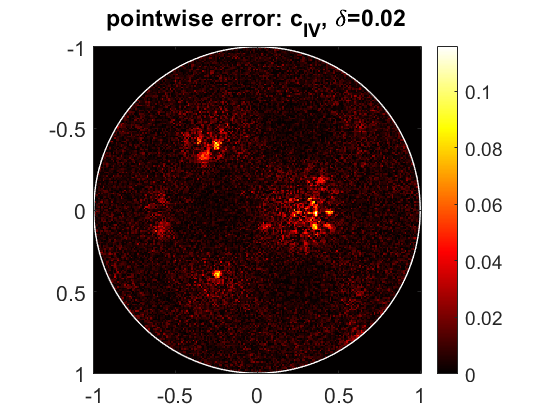}
\caption{\textbf{Results for noisy data for trapping sound speed $c_{\rm IV}$.}  Top row shows the  reconstructions of the phantom $f^a$, $f^b$  and $f^c$. Bottom row: Corresponding difference images for to the true phantom. Here  $\la = 1/2$ and $T=2$}
\label{fig:trapping}
\end{figure}

Next we present results for noisy data where $\Wnum_T^\sharp  \fnum$ has been contaminated with normally distributed noise with a standard deviation of  two percent of the maximal pressure value.  In order to avoid inverse crime, data are simulated using a three times finer discretization than used for the reconstruction.
We observe that in all cases the iteration process is  stable when $\la$ is  chosen sufficiently small and the use of a stopping rule is not necessary. Moreover, the reconstructions are more accurate for smooth phantoms than for  piecewise constant phantoms.   Finally in Figure \ref{fig:trapping} we show results for the trapping sound speed $c_{\rm iv}$. Also in this case, the iterative time-reversal works well for all phantoms even the  theory developed in the previous Sections does not fully apply in this situation.

\section{Conclusion}
\label{sec:conclusion}

In this work we studied an inverse source problem appearing in full field PAT.  Image reconstruction amounts to the  inversion of the exterior final time wave transform $\Wo_{T, \Omega}$ that maps the initial data $f$ supported in $\Omega$  to the solution of the wave equation at fixed time $T$ restricted  to the complement $\Omega^c$.    For non-constant sound speed, besides the work \cite{haltmeier2019photoacoustic}, to the best of our knowledge, inversion of $\Wo_{T, \Omega}$ is studied for the first time.   We, for the first time, derived uniqueness  and stability results. Moreover we  showed convergence of the proposed iterative time-reversal reconstruction algorithm. For that purpose we have proven that $\Io - \la \Wo_{T, \Omega}^\sharp \Wo_{T, \Omega} $ is a contraction on $H^1_0(\Omega)$ for all $\la \in  (0,2)$ where $\Wo_{T, \Omega}^\sharp $  is a modified time-reversal  operator. We also derived a numerical realization of the iterative time-reversal algorithm. Numerical results show accurate reconstruction for all sound speed profiles and all initial data.

\section*{Acknowledgments}
M.H.  acknowledges support of the Austrian Science Fund (FWF), project P 30747-N32.
The~research of L.N. has been supported by the National Science Foundation (NSF) Grants DMS 1212125 and DMS 1616904.

\end{document}